\newcommand{\bel}[1]{\begin{equation}\label{#1}}
\newcommand{\be}{\begin{equation}}
\newcommand{\ba}{\begin{eqnarray}}
\newcommand{\ea}{\end{eqnarray}}
\newcommand{\qe}{\end{equation}}
\newcommand{\R}{{\mathbb R}}
\newcommand{\N}{{\mathbb N}}
\newcommand{\Z}{{\mathbb Z}}
\newcommand{\loc}{{\mathrm{loc}}}
\newcommand{\td}{\widetilde}
\newcommand{\<}{\langle}
\theoremstyle{theorem}
\newtheorem{theorem}{Theorem}[section]
\newtheorem{proposition}{Proposition}[section]
\theoremstyle{example}
\theoremstyle{corollary}
\newtheorem{corollary}{Corollary}[section]
\theoremstyle{lemma}
\newtheorem{lemma}{Lemma}[section]
\theoremstyle{definition}
\theoremstyle{proof}
\theoremstyle{remark}
\newtheorem{remark}{Remark}[section]
\theoremstyle{remark}
\begin{document}

\title{Liouville theorems for $f$-harmonic maps into Hadamard spaces}

\author{Bobo Hua}
\address{
School of Mathematical Sciences, LMNS, Fudan University, Shanghai 200433, China; Shanghai Center for Mathematical Sciences, Fudan University, Shanghai 200433, China}\email{bobohua@fudan.edu.cn}
\author{Shiping Liu}
\address{Department of Mathematical Sciences, Durham University, DH1 3LE Durham,
United Kingdom\\
Current address: School of Mathematical Sciences, University of Science and Technology of China, Hefei 230026, China}
\email{spliu@ustc.edu.cn}
\author{Chao Xia}
\address{School of Mathematical Sciences, Xiamen University, 361005, Xiamen, China}
\email{chaoxia@xmu.edu.cn}

\thanks{The research leading to these
results has received funding from the European Research Council
under the European Union's Seventh Framework Programme
(FP7/2007-2013) / ERC grant agreement n$^\circ$~267087. B.H. is supported in part by NSFC Grant no. 11401106. S.L. was  supported in part by the EPSRC Grant EP/K016687/1. C.X. is  supported in part by NSFC Grant No. 11501480.}

\begin{abstract}
In this paper,  we study harmonic functions on weighted manifolds
and harmonic maps from weighted manifolds into Hadamard spaces
introduced by Korevaar and Schoen. We prove several Liouville type
theorems for these harmonic maps.

\end{abstract}
\maketitle

\section{Introduction}
Weighted Riemannian manifolds, also called manifolds with density
 or smooth metric measure spaces
in the literatures, are Riemannian manifolds equipped with weighted
measures. Appearing naturally in the study of self-shrinkers, Ricci
solitons, harmonic heat flows and many others, weighted
manifolds are proved to be nontrivial generalizations of Riemannian
manifolds. There are many geometric investigations of weighted
manifolds, see Morgan \cite{Morgan05}, Wei-Wylie
\cite{WeiWylie09} and many others. In this paper, we investigate various Liouville
type theorems for harmonic functions on weighted manifolds as well
as harmonic maps from weighted manifolds into Hadamard spaces, i.e.
globally nonpositively curved spaces in the sense of Alexandrov (also
called $\mathrm{CAT}(0)$ spaces), see e.g. \cite{Jost97NPC, BuragoBuragoIvanov01}.

 A weighted Riemannian manifold is a triple $(M, g, e^{-f}dV_g)$, where $(M, g)$
is an $n$-dimensional Riemannian manifold, $dV_g$ is the Riemannian
volume element induced by the metric $g$ and $f$ is a smooth positive function on $M$. 
The $f$-Laplacian
$$\Delta_f=\Delta-\nabla f\cdot\nabla$$ is a natural generalization
of Laplace-Beltrami operator $\Delta$ as it is self-adjoint with respect to
the weighted measure $e^{-f}dV_g$, i.e.
$$\int_M \Delta_f u v e^{-f}dV_g=\int_M u \Delta_f v e^{-f}dV_g \hbox{ for } u,v \in C^\infty_0(M).$$ A function $u\in W^{1,2}_{\loc}(M)$ is called $f$-harmonic ($f$-subharmonic, $f$-superharmonic resp.) if it satisfies $\Delta_f u= 0\ (\geq 0, \leq 0\ \mathrm{resp.}) $ in the weak sense, i.e.
$$\int_M \<\nabla u,\nabla \varphi\rangle e^{-f}dV_g= 0\ (\leq 0, \geq 0 \ \mathrm{resp.}) \hbox{ for\ any\ } 0\leq \varphi\in C^\infty_0(M).$$
The Dirichlet $f$-energy of $u$ is defined by $$D^f(u)=\int_M |\nabla
u|^2 e^{-f}dV_g.$$

 On the other hand, $f$-harmonic maps from weighted manifolds $(M, g, e^{-f}dV_g)$ to (singular) metric spaces $(Y,d)$ have wide geometric applications.
 Harmonic maps into metric spaces were initiated by Gromov-Schoen \cite{GromovSchoen92} and then investigated independently by Korevaar-Schoen \cite{KorevaarSchoen93} and Jost \cite{Jost94}. In particular,
 when the domain is a Riemannian manifold, Korevaar-Schoen gave a complete exposition in \cite{KorevaarSchoen93,KorevaarSchoen97}. In this paper we call
a map $u: M\to Y$
 $f$-harmonic if $u$ locally minimizes the $f$-energy functional $E^f$ in the sense of Korevaar-Schoen. For a detailed definition and its properties, we refer to \cite{KorevaarSchoen93} or Section 4 below.
For the special case, $f$-harmonic
 maps from the Gaussian spaces, $(\R^n, |\cdot|, e^{-|x|^2/4}dx)$, to Riemannian manifolds are called quasi-harmonic spheres which emerge in the blow-up analysis of harmonic heat flow \cite{LinWang99,LiTian00}.
In this paper, we study Liouville theorems for $f$-harmonic maps
into metric spaces, which generalize the previous results for
harmonic maps in both aspects of domain manifolds and target spaces.

Analysis on weighted manifolds and  the corresponding $f$-Laplacian have
been extensively studied recently. We refer to
\cite{MunteanuWang11,MunteanuWang12,Brighton13,Lixiangdong05,Lixiangdong13}
for the $f$-harmonic functions on weighted manifolds, to
\cite{LiWang09,ZhuWang10,LiZhu10,LiYang12} for $f$-harmonic
functions on the Gaussian spaces, to \cite{Grigoryan06,Grigoryan09}
for heat kernel estimates, and to
\cite{LinWang99,WX,ChenJostQiu12,RV,Sinaei12a,Sinaei12b} for
$f$-harmonic maps.

\

In the first part of the paper we are concerned with Liouville type theorems
for $f$-harmonic functions on weighted manifolds. Several Liouville type theorems for $f$-harmonic functions on the Gaussian
spaces, also called quasi-harmonic functions, have been proved in
\cite{ZhuWang10,LiWang09}, in which main techniques adopted are gradient
estimates and separation of variables coupled with ODE results. In
this paper, we propose another approach, which seems to be
overlooked in the literatures, to reprove many previous results. This method can be easily generalized, so that we may obtain Liouville
theorems for $f$-harmonic functions for a large class of weighted manifolds, see Section~\ref{section2}.

Our observation is that the weighted version of $L^p$-Liouville theorem for weighted manifolds can be used to derive various
Liouville theorems concerning the growth of $f$-harmonic functions.
Yau \cite{Yau76} first proved the $L^{p}$-Liouville theorem
($1<p<\infty$) for harmonic functions on any complete Riemannian manifold.
Later, Karp \cite{Karp82} obtained a quantitative version of this
result. Li-Schoen \cite{LiSchoen84} proved other $L^p$-Liouville
theorems (e.g. $0<p<1$) under the curvature assumption of manifolds.
Karp's version of $L^p$-Liouville theorem has been generalized by
Sturm \cite{Sturm94} to the setting of strongly local regular
Dirichlet forms. In particular, our $f$-harmonic functions lie in
this setting. By applying Sturm's $L^p$-Liouville theorem to
$f$-harmonic functions, we immediately obtain several consequences
which generalize previous results of
\cite{ZhuWang10,LiWang09,LiZhu10,LiYang12}. Although the proof of
$L^p$-Liouville theorem is quite general which only involves the
integration by parts and the Caccioppoli inequality (thus it holds
for all reasonable spaces), it is surprisingly powerful to obtain
various Liouville theorems for weighted
manifolds with slow volume growth, especially for the Gaussian
spaces, see Corollaries~\ref{Corollary1} and \ref{gauss} in
Section~\ref{section2}. This does provide another approach to derive
Liouville theorems without using any gradient estimate.

\

The second part of this paper is devoted to the study of Liouville
type theorems for harmonic maps from weighted manifolds to Hadamard
spaces. For applications of $f$-harmonic maps with singular targets
we refer to Gromov-Schoen \cite{GromovSchoen92}. Our first result is
an analogue to Kendall's theorem \cite[Theorem~3.2]{Kendall90}. The
essence of Kendall's theorem is that validity of Liouville theorem
for $f$-harmonic maps into Hadamard spaces, a priori a nonlinear
problem, is reduced to that of Liouville theorem of $f$-harmonic
functions, a linear problem. 
Kendall \cite{Kendall90} proved this theorem for harmonic maps
between Riemannian manifolds, by using probabilistic methods and
potential theory. Kuwae-Sturm \cite{KuwaeSturm08} generalized
Kendall's method to a class of harmonic maps between general metric
spaces in the framework of Markov processes.
Note that harmonic maps they were concerned with are different from those of Korevaar-Schoen \cite{KorevaarSchoen93} when targets are singular. 
In this paper, we consider harmonic maps into Hadamard spaces in the sense of Korevaar-Schoen. Following the argument by Li-Wang
\cite{LiWang98}, we are able to prove the following Kendall type theorem by assuming local compactness of the targets.
Recall that a geodesic space $(Y,d)$ is called locally compact if
every closed geodesic ball is compact.
\begin{theorem}\label{Kendall}
Let $(M, g, e^{-f}dV_g)$ be a complete weighted Riemannian manifold satisfying that any bounded $f$-harmonic function is constant. Let $(Y,d)$
be a locally compact Hadamard space. Then  any $f$-harmonic map from
$M$ to $Y$ having bounded image is a constant map.
\end{theorem}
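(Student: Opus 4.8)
The plan is to reduce this nonlinear Liouville statement to the linear hypothesis by exploiting the convexity of distance functions on the Hadamard target, in the spirit of Li--Wang \cite{LiWang98}. Since $u$ has bounded image, $\overline{u(M)}$ is contained in some closed geodesic ball of $Y$, which is compact by hypothesis; hence $K:=\overline{u(M)}$ is compact. By the Korevaar--Schoen regularity theory $u$ is locally Lipschitz, and since $u$ locally minimizes the $f$-energy into a nonpositively curved space, composing $u$ with a convex Lipschitz function on $Y$ produces a bounded $f$-subharmonic function on $M$; in particular, for every $q\in Y$ the function $x\mapsto d(u(x),q)$ is bounded and $f$-subharmonic (so $-d(u(\cdot),q)$ is $f$-superharmonic), and the same holds with $d(\cdot,q)$ replaced by any finite sum $\sum_i d(\cdot,q_i)$, these being convex and Lipschitz on $Y$.

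Next I would record the following consequence of the hypothesis: if $v_1$ is a bounded $f$-subharmonic and $v_2$ a bounded $f$-superharmonic function on $M$ with $v_1\le v_2$, then $\sup_M v_1\le\inf_M v_2$. Indeed, exhaust $M$ by geodesic balls $B_\rho$ and solve $\Delta_f h_\rho=0$ in $B_\rho$ with $h_\rho=v_1$ on $\partial B_\rho$; the weak comparison principle for $\Delta_f$ gives $v_1\le h_\rho\le v_2$ in $B_\rho$ and $h_\rho\le h_{\rho'}$ whenever $\rho\le\rho'$. The monotone limit $h:=\lim_\rho h_\rho$ is then a bounded $f$-harmonic function on $M$ (by Harnack convergence), hence constant by assumption, and $v_1\le h\le v_2$ yields the claim.

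To force $K$ to be a single point, take $q_1,\dots,q_N\in Y$, set $\phi=d(\cdot,q_1)$ and $\chi=d(\cdot,q_2)$, and apply the comparison principle to $v_1=\phi\circ u$ and $v_2=\big(\max_K(\phi+\chi)\big)-\chi\circ u$ (here $v_1\le v_2$ because $(\phi+\chi)\circ u\le\max_K(\phi+\chi)$): this gives $\max_K\phi+\max_K\chi\le\max_K(\phi+\chi)$, so, the reverse inequality being automatic, $\max_K(\phi+\chi)=\max_K\phi+\max_K\chi$. Iterating, $\max_K\sum_{i=1}^N d(\cdot,q_i)=\sum_{i=1}^N\max_K d(\cdot,q_i)$; since $K$ is compact this maximum is attained at some $p^\ast\in K$, and comparing term by term forces $d(p^\ast,q_i)=\max_K d(\cdot,q_i)$ for every $i$. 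Thus the closed subsets $F_q:=\{p\in K:d(p,q)=\max_K d(\cdot,q)\}$ of the compact set $K$ have the finite intersection property, so $\bigcap_{q\in Y}F_q\ne\emptyset$; choosing $p_\ast$ in this intersection, in particular $p_\ast\in F_{p_\ast}$, so $0=d(p_\ast,p_\ast)=\max_{p\in K}d(p,p_\ast)$. Hence $K=\{p_\ast\}$ and $u\equiv p_\ast$.

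The routine ingredients are the Korevaar--Schoen regularity and subharmonicity facts \cite{KorevaarSchoen93}, the solvability and comparison principle for the uniformly elliptic operator $\Delta_f$ on balls, and Harnack convergence of monotone sequences of $f$-harmonic functions. The essential point, and the place where local compactness of $Y$ is indispensable, is the last step: converting the linear input into the additivity of maxima of convex functions over the \emph{compact} image $K$, and then using compactness a second time to locate a single point of $K$ that is a farthest point of $K$ from \emph{every} point of $Y$ simultaneously, which can only be $K$ itself.
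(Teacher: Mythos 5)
Your argument is correct, and it takes a genuinely different route from the paper. The paper follows Li--Wang \cite{LiWang98} quite literally: the hypothesis is first converted, via Grigor'yan's theory of $f$-massive sets, into the statement that any two $f$-massive subsets intersect; one then passes to the Stone--\v{C}ech compactification $\hat M$ and produces a nonempty set $S\subset\hat M\setminus M$ on which \emph{every} bounded $f$-subharmonic function attains its supremum; local compactness of $Y$ is used to extract a limit $Q_1$ of $u(x_n)$ along a sequence $x_n\to\hat x\in S$, and then $\sup d(u(\cdot),Q_1)=d(Q_1,Q_1)=0$. You instead extract from the Liouville hypothesis a separation lemma (bounded $f$-subharmonic $\le$ bounded $f$-superharmonic forces $\sup$ of the former $\le\inf$ of the latter, via Dirichlet problems on an exhaustion, monotonicity and Harnack convergence), convert it into additivity of maxima of distance functions over $K=\overline{u(M)}$, and finish with a finite-intersection-property/farthest-point argument; here local compactness enters only to make $K$ compact. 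Both proofs rest on the same nonlinear input, namely Theorem \ref{subharmonicity} ($f$-subharmonicity of $d(u(\cdot),Q)$), but yours avoids the Stone--\v{C}ech compactification and the massive-set machinery entirely, at the price of invoking solvability of the Dirichlet problem and the Harnack principle for $\Delta_f$ (routine, but you should take the exhaustion by relatively compact domains with smooth boundary rather than geodesic balls, whose boundaries need not be regular). Two small points worth making explicit: the $f$-subharmonicity of $\bigl(\sum_i d(\cdot,q_i)\bigr)\circ u$ needed in the iteration follows simply because a finite sum of the weakly $f$-subharmonic functions $d(u(\cdot),q_i)$ is again weakly $f$-subharmonic, so you do not need the general convex-composition theorem; and the degenerate case where $M$ is compact is handled directly by the maximum principle. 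With those remarks your proof is complete.
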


\

In the same spirit of Kendall's theorem, Cheng-Tam-Wan \cite{CTW}
proved a Liouville type theorem for harmonic maps with finite
energy. Our second result is a generalization of Cheng-Tam-Wan's
theorem to $f$-harmonic maps into Hadamard spaces.
\begin{theorem}\label{Cheng} Let $(M, g, e^{-f}dV_g)$ be a complete noncompact weighted Riemannian manifold satisfying that any $f$-harmonic function with finite Dirichlet $f$-energy is bounded. Let $(Y,d)$ be an Hadamard space. Then any $f$-harmonic map from $M$ to $Y$ with finite $f$-energy has bounded image.
\end{theorem}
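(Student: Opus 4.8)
The plan is, in the spirit of Theorem~\ref{Kendall}, to replace the $f$-harmonic map by a nonnegative $f$-subharmonic function via post-composition with a distance function, and then to dominate that function by an $f$-harmonic function of finite Dirichlet $f$-energy constructed through an exhaustion. Concretely, I would fix $Q\in Y$ and put $\rho:=d(u(\cdot),Q)$. Since $Y$ is an Hadamard space, $d(\cdot,Q)$ is a convex $1$-Lipschitz function, so by the composition and convexity properties of the Korevaar--Schoen energy (Section~4) the function $\rho$ belongs to $W^{1,2}_{\loc}(M)$, is $f$-subharmonic, and satisfies $|\nabla\rho|^2\le|\nabla u|^2$ a.e., whence $D^f(\rho)\le E^f(u)<\infty$; moreover $\rho$ is continuous, as energy minimizers into Hadamard spaces are locally H\"older (indeed Lipschitz) continuous. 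Since ``$u$ has bounded image'' is equivalent to ``$\rho$ is bounded'', it suffices to bound $\rho$.

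If $\Delta_f\rho=0$, then $\rho$ is an $f$-harmonic function of finite Dirichlet $f$-energy and is therefore bounded by hypothesis. Assume otherwise, so that the Radon measure $\mu:=\Delta_f\rho\ge0$ is nonzero. Exhaust $M$ by smooth precompact domains $\Omega_1\Subset\Omega_2\Subset\cdots$ with union $M$, and for each $i$ let $v_i$ solve $\Delta_f v_i=0$ in $\Omega_i$ with $v_i=\rho$ on $\partial\Omega_i$ (solvable since $\rho\in C^0(\overline{\Omega_i})$). Then $w_i:=v_i-\rho\in W^{1,2}_0(\Omega_i)$ is $f$-superharmonic with $-\Delta_f w_i=\mu$ on $\Omega_i$, hence $w_i\ge0$ and $v_i\ge\rho\ge0$; by the maximum principle $v_i\le v_{i+1}$ on $\Omega_i$, so $v:=\lim_i v_i$ and $w:=\lim_i w_i=v-\rho$ exist pointwise in $[0,\infty]$; and since $\rho$ is an admissible competitor, $D^f_{\Omega_i}(v_i)\le D^f_{\Omega_i}(\rho)\le D^f(\rho)$.

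The crucial point — and, I expect, the main obstacle — is the almost-everywhere finiteness of $v$, without which the exhaustion is useless. The device is to test $\Delta_f v_i=0$ against $w_i$, which after integration by parts yields
\[
\int_{\Omega_i}w_i\,d\mu=\int_{\Omega_i}|\nabla w_i|^2e^{-f}dV_g=-\int_{\Omega_i}\langle\nabla\rho,\nabla w_i\rangle e^{-f}dV_g\le D^f(\rho),
\]
the last step by Cauchy--Schwarz and absorbing $D^f_{\Omega_i}(w_i)=\int_{\Omega_i}w_i\,d\mu$ into the left-hand side. Monotone convergence then gives $\int_M w\,d\mu\le D^f(\rho)<\infty$, so $w\not\equiv+\infty$ because $\mu\ne0$; as $w$ is the increasing limit of the $f$-superharmonic functions $w_i$ on the connected $M$, a standard argument shows $w$ is $f$-superharmonic and finite a.e. Hence $v=\rho+w$ is finite a.e.; the uniform bounds $D^f_{\Omega_i}(v_i)\le D^f(\rho)$ provide local $W^{1,2}$ compactness, and together with $v_i\to v$ pointwise and lower semicontinuity of the Dirichlet $f$-energy this shows $v$ is $f$-harmonic on $M$ with $D^f(v)\le D^f(\rho)<\infty$. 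By hypothesis $v$ is bounded, and since $0\le\rho\le v$ so is $\rho$; thus $u(M)$ lies in a closed geodesic ball about $Q$.

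Beyond the finiteness of $v$, the points requiring care are: importing correctly from the Korevaar--Schoen theory that $d(\cdot,Q)\circ u$ is $f$-subharmonic with the stated bound on its energy density; the classical solvability of the Dirichlet problems and the legitimacy of the maximum-principle comparisons, which rely on continuity of $\rho$ up to $\partial\Omega_i$; and the identity $D^f_{\Omega_i}(w_i)=\int_{\Omega_i}w_i\,d\mu$ together with the passage $w_i\to w$, $v_i\to v$ to the limit, both in $W^{1,2}_{\loc}$ and in the sense of $f$-(super)harmonic functions.
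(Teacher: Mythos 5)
Your argument is correct in substance, but it reaches the conclusion by a genuinely different route from the paper. The paper also begins with the $f$-subharmonicity of the composed distance function (Theorem \ref{subharmonicity}, applied there to $\sqrt{d^2(u(\cdot),Q)+1}$) and the energy bound (\ref{fHar3}), but then simply invokes Lemma \ref{CTW}: the hypothesis that every function in $HD^f(M)$ is bounded is equivalent to the boundedness of every nonnegative $f$-subharmonic function of finite Dirichlet $f$-energy. The nontrivial direction of that lemma is proved by splitting into the $f$-parabolic case (Proposition \ref{prop}) and the non-$f$-parabolic case, where the Royden--Nakai decomposition $\rho=h+g$ with $h\in HD^f(M)$, $g\in D_0^f(M)$ and $\rho\le h$ (Theorem \ref{royden}) does the work; the finiteness of $h$ there is extracted from a capacity-type estimate using the $f$-harmonic measure functions $w_k$ with $D^f(w)>0$ together with the Harnack inequality. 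You instead build the least $f$-harmonic majorant $v$ of $\rho=d(u(\cdot),Q)$ directly on an exhaustion and prove its finiteness by pairing $w_i=v_i-\rho$ against the Riesz measure $\mu=\Delta_f\rho$, obtaining $\int_{\Omega_i}w_i\,d\mu=\int_{\Omega_i}|\nabla w_i|^2e^{-f}dV_g\le D^f(\rho)$; this replaces the parabolic/non-parabolic dichotomy by the dichotomy $\mu=0$ versus $\mu\neq0$ and makes the proof self-contained. The trade-off is that your route is shorter and bypasses the Royden--Nakai machinery entirely, whereas the paper's route establishes the decomposition and Virtanen theorems as standing tools, which it needs again later (Theorem \ref{virtanen} enters the proof of Theorem \ref{main thm}). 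The technical points you flag yourself --- interior Lipschitz continuity of Korevaar--Schoen energy minimizers (so that $\rho$ is continuous and the Dirichlet problems are classically solvable), the interpretation of $\int w_i\,d\mu$ for the finite-energy measure $\mu$ via quasi-continuous (here continuous) representatives, the observation that each $w_i$ is $f$-superharmonic only on $\Omega_i$ so the Harnack-type convergence argument must be run on a fixed member of the exhaustion before globalizing by connectedness, and the lower semicontinuity of $D^f$ under the monotone limit $v_i\uparrow v$ --- are all standard and close without difficulty.
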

We will follow the line of Cheng-Tam-Wan's reasoning, but using the
techniques in potential theory, especially the theory of
Royden-Nakai's decomposition on Riemannian
manifolds \cite{Royden52,Nakai60,SarioNakai70}. This possible
approach of potential theory was implicitly suggested by Lyons in
\cite[pp.~278]{CTW}. We figure out the detailed arguments of this
insight and apply them to Liouville theorems of $f$-harmonic maps.
Royden-Nakai's decomposition theorem and Virtanen's theorem, see e.g. Section~\ref{section5} for weighted versions, play important roles in the classification theory of Riemannian manifolds
developed by Royden, Nakai, Sario et al. many years ago.
We shall dwell on these theories in the framework of weighted
manifolds in Section 5 and utilize them to prove Theorem \ref{Cheng}.

The following theorem is, more or less, a consequence of the
combination of Theorem \ref{Kendall} and \ref{Cheng}.
\begin{theorem}\label{main thm}
Let $(M,g, e^{-f}dV_g)$ be a complete noncompact weighted Riemannian
manifold satisfying that any bounded $f$-harmonic functions is constant. Let $(Y,d)$ be a locally compact Hadamard space. Then any  $f$-harmonic
map from $M$ to $Y$ with
finite $f$-energy 
is a constant map.
\end{theorem}

This theorem has an interesting application which motivates our studies in some sense.
Bakry-\'Emery \cite{BakryEmery85} introduced weighted Ricci curvature for weighted manifolds. 
In particular, the so-called $\infty$-Bakry-\'Emery Ricci curvature
\begin{eqnarray*}
Ric_f:=Ric+\nabla^2 f
\end{eqnarray*} turns out to be a suitable and important curvature quantity
for weighted manifolds. The nonnegativity of $Ric_f$ corresponds to
the curvature-dimension condition $CD(0,\infty)$ on metric measure
spaces via optimal transport, in the sense of Lott-Villani
\cite{LottVillani09} and Sturm \cite{Sturm06a,Sturm06b}.
By a theorem of Brighton \cite{Brighton13}, see also Li
\cite{Lixiangdong13}, the weighted manifold $(M,g, e^{-f}dV_g)$
satisfying $Ric_f\geq 0$ admits no nonconstant bounded $f$-harmonic
functions. Hence by Theorem \ref{main thm} we immediately have
\begin{theorem}\label{main thm1}
Let $(M,g, e^{-f}dV_g)$ be a complete noncompact weighted Riemannian
manifold satisfying $Ric_f\geq 0$ and $(Y,d)$ be a locally compact
Hadamard space. Then any  $f$-harmonic map from $M$ to $Y$ with
finite $f$-energy
is a constant map.
\end{theorem}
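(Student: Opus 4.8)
The plan is to obtain Theorem \ref{main thm1} as a direct corollary of Theorem \ref{main thm}. Inspecting the hypotheses of Theorem \ref{main thm}, the only point that is not already part of the statement of Theorem \ref{main thm1} is that $(M,g,e^{-f}dV_g)$ admits no nonconstant bounded $f$-harmonic function. So the entire task reduces to verifying this Liouville property for $f$-harmonic functions under the curvature condition $Ric_f\geq 0$.

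First I would invoke the theorem of Brighton \cite{Brighton13} (see also Li \cite{Lixiangdong13}): on a complete weighted Riemannian manifold with $Ric_f\geq 0$, every bounded $f$-harmonic function is constant. This is proved via a gradient estimate of Yau type, resting on the weighted Bochner formula in which $Ric_f=Ric+\nabla^2 f$ plays the role of the Ricci curvature; we take it as a black box here. Granting this, the hypothesis of Theorem \ref{main thm} is met, since $M$ is complete and noncompact and $(Y,d)$ is a locally compact Hadamard space by assumption. Hence any $f$-harmonic map $u\colon M\to Y$ with finite $f$-energy is constant by Theorem \ref{main thm}, which is exactly the assertion.

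I do not expect any genuine obstacle in this last step: all of the analytic substance has been front-loaded into Theorems \ref{Kendall}, \ref{Cheng}, and \ref{main thm}, whose proofs carry out, respectively, the Kendall-type reduction of the nonlinear Liouville problem to the linear one, the Cheng--Tam--Wan-type finite-energy argument via Royden--Nakai theory, and their combination. The only point requiring a little care is that Brighton's conclusion is precisely the nonexistence of \emph{nonconstant bounded} $f$-harmonic functions, which is exactly the form of the hypothesis in Theorem \ref{main thm}, so it plugs in directly without any further argument.
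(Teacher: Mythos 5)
Your proposal is correct and is exactly the paper's own argument: the authors deduce Theorem \ref{main thm1} from Theorem \ref{main thm} by invoking Brighton's theorem (see also Li \cite{Lixiangdong13}) that a complete weighted manifold with $Ric_f\geq 0$ admits no nonconstant bounded $f$-harmonic function. Nothing further is needed.
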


The novelty of the result lies in the generality of targets, i.e., including singular metric spaces. In the smooth setting, Hadamard spaces are in fact Cartan-Hadamard manifolds, i.e. simply connected Riemannian manifolds with nonpositive sectional curvature. On Riemannian manifolds, Theorem~\ref{main thm1} has been proved by Wang-Xu \cite{WX} and
Rimoldi-Veronelli \cite{RV} independently under an additional assumption of $\int_M
e^{-f}dV_g=\infty$ for domain manifolds, while simply connectedness of the targets is not needed. Note that the weighted volume assumption here cannot be derived from the curvature condition $Ric_f\geq
0$ in
general. In addition, there is a nontrivial $f$-harmonic map from a domain manifold with $Ric_f\geq 0$ and $\int_M e^{-f}dV_g<\infty$ to a nonpositively curved target manifold, constructed by \cite[Remark~3.7]{RV}.  Our contribution is to drop the weighted volume assumption by assuming simply connectedness of the targets and to extend the result to singular spaces.

For harmonic maps into singular Hadamard spaces, the arguments in
\cite{WX,RV}, both following Schoen-Yau
\cite{SchoenYau76}, do not work any more since we cannot apply
Bochner techniques as in \cite{WX,RV} due to the
singularity of targets. Although a weak Bochner formula can also be derived following Korevaar-Schoen \cite{KorevaarSchoen93}, it is insufficient for our purpose. 
Fortunately, we can circumvent these technical problems by 
 proving Theorem \ref{main thm} which follows from Kendall type theorems. This does provide another approach to Liouville theorems for $f$-harmonic maps without using Bochner techniques.
 This is one of the main points of the paper.

\

The rest of the paper is organized as follows. In Section 2, we
study $L^p$ Liouville theorem for $f$-harmonic functions and give
some applications. In Section 3, we consider harmonic maps with smooth targets. In Section 4, we
define $f$-harmonic maps into
Hadamard spaces and
prove Theorem \ref{Kendall}. 
In Section 5, we dwell on the Royden-Nakai theory and prove Theorems \ref{Cheng} and 
\ref{main thm}.

\section{$f$-harmonic functions}\label{section2}

In this section, we study $L^p$-Liouville theorem for
$f$-harmonic functions and its applications. We will show that $L^p$-Liouville theorem is quite powerful for weighted manifolds with
finite volume.

The $L^p$-Liouville theorem, $1<p<\infty$, for harmonic functions
(or nonnegative subharmonic functions) was initiated by Yau
\cite{Yau76} on complete Riemannian manifolds. Karp \cite{Karp82}
obtained a quantitative version of this Liouville theorem. Later,
Sturm \cite{Sturm94} proved $L^p$-Liouville theorem for strongly
local regular Dirichlet forms. The following theorem is a special
case of Sturm's result for $f$-harmonic functions. We denote by
$B_r:=B_r(x_0)$ the closed geodesic ball of
radius $r$ centered at a fixed point $x_0\in M$. 

\begin{theorem}[{\cite[Theorem~1]{Sturm94}}]\label{quantitativethem}
Let $(M, g ,e^{-f}dV_g)$ be a complete weighted Riemannian manifold
and $u$ be a nonnegative $f$-subharmonic function (or an
$f$-harmonic function). For $1<p<\infty,$ set
$v(r):=\int_{B_r}|u|^pe^{-f}dV_g.$ Then either
\begin{equation*}\label{quantitative}
\inf_{a>0}\int_{a}^{\infty}\frac{r}{v(r)}dr<\infty,
\end{equation*} or  $u$ is a constant.
\end{theorem}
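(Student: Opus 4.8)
The plan is to prove the contrapositive: assuming that $\int_a^\infty\frac{r}{v(r)}\,dr=\infty$ for every $a>0$, I would show that $u$ is constant. First a reduction: if $u$ is $f$-harmonic then $|u|$ is nonnegative and $f$-subharmonic (a maximum of $f$-subharmonic functions), and since $f$-harmonic functions are smooth and $M$ is connected, $|u|$ is constant precisely when $u$ is; so we may assume throughout that $u\ge 0$ is $f$-subharmonic. By De Giorgi--Nash--Moser, weak subsolutions of the locally uniformly elliptic operator $\Delta_f$ are locally bounded above, hence $0\le u\le M$ on every compact set; and we may assume $u\not\equiv 0$, so $v(R_0)>0$ for some $R_0$ and thus $v(r)>0$ for all $r\ge R_0$ by monotonicity. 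The engine is a weighted Caccioppoli inequality: for $\varepsilon>0$ set $w_\varepsilon:=(u+\varepsilon)^{p/2}$; testing the defining inequality $\int_M\langle\nabla u,\nabla\psi\rangle e^{-f}dV_g\le 0$ against $\psi=\varphi^2(u+\varepsilon)^{p-1}$ (admissible for Lipschitz compactly supported $\varphi$, since $u+\varepsilon$ is bounded away from $0$ and locally bounded above), expanding, and absorbing the cross term by Young's inequality, one obtains
\[
\int_M\varphi^2|\nabla w_\varepsilon|^2 e^{-f}dV_g\ \le\ \frac{p^2}{(p-1)^2}\int_M|\nabla\varphi|^2\,w_\varepsilon^2\,e^{-f}dV_g .
\]

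The key step, following Karp, is the choice of $\varphi$. Fix $R_0$ as above, set $R_k:=2^kR_0$, and for each $N\ge 1$ take $\varphi=\theta\circ d(\cdot,x_0)$ where $\theta\equiv 1$ on $[0,R_0]$, $\theta\equiv 0$ on $[R_N,\infty)$, and $\theta$ is affine on each $[R_k,R_{k+1}]$ with values $a_0=1\ge a_1\ge\cdots\ge a_N=0$ still to be chosen. Then $|\nabla\varphi|\le(a_k-a_{k+1})/R_k$ a.e. on $B_{R_{k+1}}\setminus B_{R_k}$, so with $v_\varepsilon(r):=\int_{B_r}(u+\varepsilon)^p e^{-f}dV_g=\int_{B_r}w_\varepsilon^2 e^{-f}dV_g$ and the trivial bound $\int_{B_{R_0}}|\nabla w_\varepsilon|^2 e^{-f}dV_g\le\int_M\varphi^2|\nabla w_\varepsilon|^2 e^{-f}dV_g$, the Caccioppoli inequality gives
\[
\int_{B_{R_0}}|\nabla w_\varepsilon|^2 e^{-f}dV_g\ \le\ \frac{p^2}{(p-1)^2}\sum_{k=0}^{N-1}\frac{(a_k-a_{k+1})^2}{R_k^2}\,v_\varepsilon(R_{k+1}).
\]
Minimizing the right-hand side over admissible $(a_k)$: with $t_k:=a_k-a_{k+1}\ge 0$ and $\sum_k t_k=1$, a one-line Lagrange computation gives $\min\sum_k t_k^2 b_k=\big(\sum_k b_k^{-1}\big)^{-1}$ for positive $b_k$, so the bound becomes $\frac{p^2}{(p-1)^2}\big(\sum_{k=0}^{N-1}R_k^2/v_\varepsilon(R_{k+1})\big)^{-1}$.

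It remains to see the denominator diverges. Since $v\le v_\varepsilon$ is nondecreasing and the radii are geometric, a Riemann-sum comparison yields $\sum_{k\ge 0}R_k^2/v(R_{k+1})\ge\frac16\int_{R_1}^\infty\frac{r}{v(r)}\,dr=\infty$. Now let first $\varepsilon\downarrow 0$ (this limit is needed only for $1<p<2$; for $p\ge2$ one simply takes $\varepsilon=0$ from the start): then $v_\varepsilon(R_{k+1})\downarrow v(R_{k+1})$ by dominated convergence on the finite weighted-volume ball $B_{R_{k+1}}$, while $|\nabla w_\varepsilon|^2=\frac{p^2}{4}(u+\varepsilon)^{p-2}|\nabla u|^2$ increases to $\frac{p^2}{4}u^{p-2}|\nabla u|^2$, so monotone convergence applies on the left; then let $N\to\infty$. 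The estimate above forces $\int_{B_{R_0}}u^{p-2}|\nabla u|^2 e^{-f}dV_g=0$, hence $\nabla u=0$ a.e. on $\{u>0\}\cap B_{R_0}$; together with the standard fact that $\nabla u=0$ a.e. on the level set $\{u=0\}$, we conclude $\nabla u=0$ a.e. on the connected set $B_{R_0}$, so $u$ is constant there. Since $R_0$ can be taken arbitrarily large, $u$ is constant on $M$, contradicting our assumption.

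The Caccioppoli inequality and the series-to-integral comparison are routine; the one genuinely clever point I would flag is the cutoff in the second step, where solving the discrete variational problem for the heights $(a_k)$ is exactly what turns a crude energy estimate into one controlled by the divergence of $\int_a^\infty\frac{r}{v(r)}\,dr$. The remaining technicality is the $\varepsilon$-regularization and the passage to the limit in the range $1<p<2$, where $u^{p/2}$ need not a priori belong to $W^{1,2}_{\loc}(M)$.
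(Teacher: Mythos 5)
Your proof is correct. The paper does not actually prove this theorem — it quotes it as a special case of Sturm's Theorem 1 in \cite{Sturm94} — and your argument is essentially the standard Karp--Sturm proof contained in that reference: the Caccioppoli inequality for $(u+\varepsilon)^{p/2}$ combined with the dyadic cutoff whose slopes are optimized against $v(R_{k+1})/R_k^2$, i.e.\ exactly the ``integration by parts and the Caccioppoli inequality'' argument the authors allude to in their introduction, so the approach matches the cited source (only the harmless framing of the last step as a ``contradiction'' rather than as the conclusion of the contrapositive could be tidied up).
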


We state several consequences of Theorem \ref{quantitativethem}.

A quite useful consequence is about $f$-parabolicity of $M$. Recall
that a weighted manifold $(M, g ,e^{-f}dV_g)$ is called
\emph{$f$-parabolic} if there is no nonconstant nonnegative
$f$-superharmonic functions on $M$. For a compact set $K\subset M,$
the \emph{$f$-capacity} of $K$ is defined as
$$\mathrm{cap}^f(K):=\inf_{\substack{\phi\in \mathrm{Lip}_0(M)\\ \phi|_K=1}}\int_M|\nabla
\phi|^2e^{-f}dV_g,$$ where $\mbox{Lip}_0(M)$ is the space of
compactly supported Lipschitz functions on $M.$
\begin{proposition}[$f$-parabolicity]\label{f-parabolic}
Let $(M, g ,e^{-f}dV_g)$ be a complete weighted manifold. Then the
following are equivalent:
\begin{enumerate}
\item[(i)]  $M$ is $f$-parabolic;
\item[(ii)]  $\mathrm{cap}^f(K)=0$ for some (then any) compact set $K\subset M$;
\item[(iii)]  any bounded $f$-superharmonic function on $M$ is constant.
\end{enumerate}
\end{proposition}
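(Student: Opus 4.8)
The plan is to prove the cyclic implications (iii) $\Rightarrow$ (i) $\Rightarrow$ (ii) $\Rightarrow$ (iii), mirroring the classical potential-theoretic equivalences for parabolicity but carried out for the operator $\Delta_f$ and the weighted measure $e^{-f}dV_g$. The implication (iii) $\Rightarrow$ (i) is essentially formal: if $M$ is not $f$-parabolic there is a nonconstant nonnegative $f$-superharmonic function $v$, and then $\arctan v$ (or $\min(v,c)$ for suitable $c$, using that a truncation of a superharmonic function from above is still superharmonic) is a nonconstant bounded $f$-superharmonic function, contradicting (iii). So the content is concentrated in the other two steps.

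For (i) $\Rightarrow$ (ii) I would argue by contrapositive. Fix a compact $K$ with smooth boundary and suppose $\mathrm{cap}^f(K) = c_0 > 0$. Exhaust $M$ by relatively compact open sets $\Omega_j \nearrow M$ with $K \subset \Omega_1$, and on each annular region $\Omega_j \setminus K$ solve the $f$-harmonic Dirichlet problem $\Delta_f w_j = 0$ with $w_j = 1$ on $\partial K$ and $w_j = 0$ on $\partial \Omega_j$, extended by $1$ on $K$; these are the $f$-capacitary potentials. The Dirichlet $f$-energies $\int |\nabla w_j|^2 e^{-f}dV_g$ are nondecreasing in $j$ and converge to $\mathrm{cap}^f(K) = c_0 > 0$ (this monotone exhaustion characterization of capacity is standard and transfers verbatim to the weighted setting). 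By the maximum principle $0 \le w_j \le 1$, the sequence is monotone, and elliptic estimates give local convergence $w_j \to w$ to an $f$-harmonic function on $M \setminus K$ with $0 \le w \le 1$, $w = 1$ on $\partial K$, and $\int_{M\setminus K} |\nabla w|^2 e^{-f}dV_g = c_0$. Since $w$ carries nonzero energy it is nonconstant; extending it by $1$ across $K$ produces a nonconstant bounded (hence nonnegative) $f$-superharmonic function on $M$ — superharmonicity across $\partial K$ follows because $w \le 1$ and $w \equiv 1$ on $K$ — so $M$ is not $f$-parabolic.

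For (ii) $\Rightarrow$ (iii) — here is where I expect the main obstacle — suppose $\mathrm{cap}^f(K) = 0$ for every compact $K$ and let $u$ be a bounded $f$-superharmonic function; we must show $u$ is constant. The natural route is to show $u$ is in fact $f$-harmonic and then invoke Theorem \ref{quantitativethem}: indeed if $u$ is bounded and $f$-harmonic, then $v(r) = \int_{B_r} |u|^p e^{-f}dV_g \le C\,(\text{measure of } B_r)$, and vanishing $f$-capacity forces the weighted volume growth to be large enough that $\int^\infty r/v(r)\,dr = \infty$, so the Theorem gives that $u$ is constant. To upgrade superharmonic to harmonic one uses the capacitary potentials $w_j$ from the previous step, which now satisfy $\int|\nabla w_j|^2 e^{-f}dV_g \to 0$; testing the weak superharmonicity inequality for $u$ against $\varphi = (1 - w_j)\psi$ for $\psi \in C_0^\infty$ and passing to the limit (using $w_j \to 0$ in $W^{1,2}_{\mathrm{loc}}$ off $K$, and Cauchy–Schwarz to kill the $\nabla w_j$ terms) shows $\int \langle \nabla u, \nabla \psi\rangle e^{-f}dV_g = 0$ for all $\psi$, i.e. $u$ is $f$-harmonic. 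The delicate points are: justifying that the capacitary potentials for $M \setminus K$ go to zero in energy precisely when $\mathrm{cap}^f(K) = 0$ (the equivalence of the variational and potential-theoretic definitions of capacity), and handling the local regularity of $u$ — $f$-superharmonic functions are only a priori in $W^{1,2}_{\mathrm{loc}}$, so one should work with their lower-semicontinuous representatives and use the standard fact that such functions are locally bounded below and can be tested against nonnegative cutoffs. Alternatively, and perhaps more cleanly, one can derive the $L^p$-type estimate directly: for bounded $f$-superharmonic $u$, replacing $u$ by $u - \inf u \ge 0$ and applying Theorem \ref{quantitativethem} to the nonnegative $f$-subharmonic function $-(u - \sup u) = \sup u - u$ — wait, that is $f$-subharmonic and nonnegative and bounded — so $v(r) \le C\,\mathrm{vol}_f(B_r)$, and vanishing capacity is known to be equivalent to $\int^\infty r/\mathrm{vol}_f(B_r)\,dr = \infty$ (this is the sharp Grigor'yan–Karp volume test for $f$-parabolicity, which I would cite from \cite{Grigoryan06} or \cite{Sturm94}), forcing $\sup u - u$ to be constant. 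This last approach is the shortest and I would write the proof that way, using the volume criterion as the bridge between (ii) and (iii).
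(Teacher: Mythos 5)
Your steps (iii)$\Rightarrow$(i) and (i)$\Rightarrow$(ii) are essentially sound (the paper handles the first by the same truncation $\min\{u,n\}$ and simply cites Grigor'yan for the capacity equivalence; note only that the energies $D^f(w_j)=\mathrm{cap}^f(K,\Omega_j)$ are non\emph{increasing} in $j$, not nondecreasing). The genuine gap is in (ii)$\Rightarrow$(iii). Both of your routes rest on the claim that $\mathrm{cap}^f(K)=0$ is \emph{equivalent} to the volume condition $\int^{\infty} r\,dr/V_f(B_r)=\infty$. That equivalence is false: the volume test is sufficient for $f$-parabolicity but not necessary --- which is exactly why Corollary \ref{fparabolic} is stated as a one-way implication. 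Already on model manifolds one can let the area $S(r)$ of geodesic spheres dip on a sparse set of radii so that $\int^{\infty}dr/S(r)=\infty$ (hence parabolic, zero capacity) while $V(r)$ still grows fast enough that $\int^{\infty}r\,dr/V(r)<\infty$. So Theorem \ref{quantitativethem} cannot be invoked from hypothesis (ii) in the way you propose. Moreover, the test function $\varphi=(1-w_j)\psi$ in your first route is vacuous in the limit: both $\int(1-w_j)\langle\nabla u,\nabla\psi\rangle e^{-f}dV_g$ and $\int\psi\langle\nabla u,\nabla w_j\rangle e^{-f}dV_g$ tend to $0$, so you only recover $0\geq 0$ and learn nothing about $u$.

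The repair is to stay entirely within capacity and never pass through volume. For $u$ bounded $f$-superharmonic set $\tilde u=\sup u-u\geq 0$, a bounded $f$-subharmonic function, and test its defining inequality against $\varphi=w_j^2\tilde u$, where $w_j$ are the capacitary potentials of a fixed compact $K$ relative to an exhaustion. Cauchy--Schwarz gives
$\int w_j^2|\nabla\tilde u|^2e^{-f}dV_g\leq 4\|\tilde u\|_{\infty}^2\,D^f(w_j)\to 4\|\tilde u\|_{\infty}^2\,\mathrm{cap}^f(K)=0$,
and since $w_j\to 1$ locally this forces $\nabla\tilde u=0$. (Equivalently, prove (ii)$\Rightarrow$(i) by Grigor'yan's comparison of the capacitary potential with $\min\{v/\min_K v,1\}$ for a nonconstant nonnegative $f$-superharmonic $v$, after which (i)$\Rightarrow$(iii) is immediate by subtracting $\inf u$.) For comparison, the paper's own proof is two lines: it quotes Grigor'yan for (i)$\Leftrightarrow$(ii) and uses the truncation $\min\{u,n\}$ for (i)$\Leftrightarrow$(iii).
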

\begin{proof}
$(i)\Leftrightarrow(ii)$. This follows from
\cite[Proposition~3]{Grigoryan87}, see
\cite[Proposition~2.1]{Grigoryan99}.

$(i)\Leftrightarrow(iii)$. This follows from the fact that any
nonnegative $f$-superharmonic function $u$ can be approximated by
bounded $f$-superharmonic functions $u_n=\min\{u,n\},$ $n\in\N.$
\end{proof}

We say a weighted manifold $(M, g ,e^{-f}dV_g)$ has the
\emph{moderate volume growth property} if
\begin{eqnarray}\label{moderate}
\int_{1}^{\infty}\frac{r}{V_f(B_r)}dr=\infty,
\end{eqnarray}
where  $V_f(B_r):=\int_{B_r}e^{-f}dV_g$.
\begin{corollary}\label{fparabolic}
Let $(M, g ,e^{-f}dV_g)$ be a complete weighted Riemannian manifold
satisfying the moderate volume growth property. Then $M$ is
$f$-parabolic.
\end{corollary}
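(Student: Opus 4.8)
The plan is to deduce Corollary~\ref{fparabolic} directly from Theorem~\ref{quantitativethem} together with Proposition~\ref{f-parabolic}. By Proposition~\ref{f-parabolic}, it suffices to show that every bounded $f$-superharmonic function on $M$ is constant, or equivalently (passing to the negative) that every bounded $f$-subharmonic function is constant. So let $w$ be a bounded $f$-subharmonic function; replacing $w$ by $w+c$ for a suitable constant $c$ we may assume $w\geq 0$, and this shift does not affect $f$-subharmonicity since constants are $f$-harmonic. Thus $u:=w$ is a nonnegative bounded $f$-subharmonic function, and we wish to conclude it is constant.

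Next I would invoke Theorem~\ref{quantitativethem} with, say, $p=2$. Set $v(r):=\int_{B_r}|u|^2 e^{-f}dV_g$. The theorem asserts that either $\inf_{a>0}\int_a^\infty \frac{r}{v(r)}\,dr<\infty$, or $u$ is constant. The strategy is to show that under the moderate volume growth hypothesis \eqref{moderate}, the first alternative fails, forcing $u$ to be constant. Since $u$ is bounded, say $|u|\leq K$, we have $v(r)\leq K^2 V_f(B_r)$, hence $\frac{r}{v(r)}\geq \frac{1}{K^2}\,\frac{r}{V_f(B_r)}$ for all $r$. Therefore
\begin{equation*}
\int_a^\infty \frac{r}{v(r)}\,dr \geq \frac{1}{K^2}\int_a^\infty \frac{r}{V_f(B_r)}\,dr = \infty
\end{equation*}
for every $a$, by \eqref{moderate} (note the tail integral diverges iff the integral from $1$ diverges, since the integrand is locally bounded away from the origin and $V_f(B_r)$ is finite and positive for $r$ large). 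Hence $\inf_{a>0}\int_a^\infty \frac{r}{v(r)}\,dr=\infty$, the first alternative of Theorem~\ref{quantitativethem} cannot hold, and so $u$ must be constant.

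Since every nonnegative bounded $f$-subharmonic function is constant, by the shift argument every bounded $f$-subharmonic function is constant, and dually every bounded $f$-superharmonic function is constant; by Proposition~\ref{f-parabolic}$(iii)\Rightarrow(i)$ this gives that $M$ is $f$-parabolic. I do not expect any serious obstacle here: the only mildly delicate points are the harmless normalization to make $u$ nonnegative (using that constants are $f$-harmonic, so boundedness and $f$-subharmonicity are preserved) and the observation that a divergent tail integral is equivalent to the divergence of $\int_1^\infty$ in \eqref{moderate}, which holds because $r/V_f(B_r)$ is continuous and positive on $[1,\infty)$ with $V_f(B_r)$ finite for each finite $r$. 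One should also remark that if $V_f(B_r)$ is bounded (finite total weighted volume) then \eqref{moderate} holds trivially, so this corollary in particular covers the finite-volume case emphasized in the introduction.
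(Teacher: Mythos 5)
Your proposal is correct and follows essentially the same route as the paper: reduce via Proposition~\ref{f-parabolic}(iii) to showing bounded $f$-superharmonic functions are constant, bound $v(r)\leq K^2V_f(B_r)$ using boundedness, and invoke Theorem~\ref{quantitativethem} together with the moderate volume growth condition. The paper's own proof is just a terser version of this (it leaves implicit the negation-and-shift normalization needed to apply the theorem, which is stated for nonnegative $f$-subharmonic functions, to a bounded $f$-superharmonic one), so your extra care there is a harmless elaboration rather than a different argument.
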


\begin{proof}
Let $u$ be a bounded $f$-superharmonic function on $M.$ Then for any
$a>0,$
$$\int_{a}^{\infty}\frac{r}{v(r)}dr\geq
C\int_{a}^{\infty}\frac{r}{V_f(B_r)}dr=\infty.$$ Theorem
\ref{quantitativethem} yields that $u$ is a constant. This proves
the corollary.
\end{proof}
\begin{remark}
Corollary \ref{fparabolic} slightly generalizes
\cite[Theorem~1.4]{WX}. In particular, this corollary implies
\cite[Theorem 2]{ZhuWang10}.
\end{remark}

We can also derive several Liouville type theorems for $f$-harmonic
functions from Theorem \ref{quantitativethem}.
\begin{corollary}\label{Corollary1}
Let $(M, g ,e^{-f}dV_g)$ be a complete weighted Riemannian manifold
and  $u$ be a nonnegative $f$-subharmonic function (or $f$-harmonic
function). Assume one of the following holds:
\begin{enumerate}
\item[(i)] $u=O(w^\alpha)$ for some nonnegative function $w$ satisfying $\int_M wd^{-2}(\cdot,x_0)e^{-f}dV_g<\infty$ and some $\alpha\in
(0,1)$;
\item[(ii)] $\int_M d^k(\cdot,x_0)e^{-f}dV_g<\infty$ for some $k>-2$ and $u=O(d^\beta(\cdot,x_0))$ for some $\beta\in (0,
k+2)$;
\item[(iii)] $\int_M e^{-f}dV_g<\infty$ and $u=O(d^{\beta}(\cdot,x_0))$ for $\beta\in(0,2)$;
\item[(iv)] $f\geq Cd(\cdot,x_0)^{\beta}$ for some $C>0,\beta>0$ and
$\int_M e^{-\delta f}dV_g<\infty$ for some $0<\delta<1$ and  $u$ has
polynomial growth;
\item [(v)] $f\geq Cd(\cdot,x_0)^{\beta}$ for some $C>0,\beta>0$ and
the Riemannian volume has polynomial volume growth and  $u=O(e^{\alpha Cd(\cdot,
x_0)^{\beta}})$, $\alpha\in (0,1)$.
\end{enumerate}
 Then $u$ is a constant.
\end{corollary}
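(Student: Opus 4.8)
The plan is to deduce each of the five cases from Theorem~\ref{quantitativethem} by verifying, under the stated hypothesis, that the divergence condition
$$\inf_{a>0}\int_a^\infty \frac{r}{v(r)}\,dr=\infty$$
holds for $v(r)=\int_{B_r}|u|^p e^{-f}dV_g$ with a suitable choice of $p\in(1,\infty)$; since Theorem~\ref{quantitativethem} then forces $u$ to be constant, this is all that is needed. So in every case the strategy is: bound $|u|^p$ pointwise on $B_r$ by an explicit function of $r$ (using the growth hypothesis on $u$), pull this bound out of the integral, and estimate $v(r)$ from above by (a power of $r$ or exponential in $r$) times a \emph{finite} weighted integral. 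Then $r/v(r)$ is bounded below by $c\,r^{-1-\theta}$ or $c\,r/(\text{poly})$ with $\theta\le 1$, whose integral over $[a,\infty)$ diverges.

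First I would handle (i), which is the ``master case''. Here $|u|\le C w^\alpha$, so choosing $p=1/\alpha>1$ gives $|u|^p\le C^p w$. On the annulus $B_{2r}\setminus B_r$ we have $d(\cdot,x_0)\ge r$, hence
$$v(2r)\ge v(2r)-v(r)\ \text{is irrelevant; rather}\quad v(2r)=\int_{B_{2r}}|u|^p e^{-f}\le C^p\int_{B_{2r}} w\, e^{-f},$$
but more efficiently I estimate $v(r)\le C^p\int_{B_r} w\,e^{-f}\le C^p\,r^2\int_{B_r} w\,d(\cdot,x_0)^{-2}e^{-f}\le C^p r^2 A$ where $A=\int_M w\,d^{-2}(\cdot,x_0)e^{-f}dV_g<\infty$ (after a harmless modification near $x_0$, e.g. replacing $d(\cdot,x_0)$ by $1+d(\cdot,x_0)$, or noting the hypothesis already tames the singularity). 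Then $\int_a^\infty r/v(r)\,dr\ge (C^pA)^{-1}\int_a^\infty r^{-1}dr=\infty$. Cases (ii) and (iii) are specializations: in (ii), take $w=d^{k+2}(\cdot,x_0)$ and $\alpha=\beta/(k+2)\in(0,1)$, and observe $\int_M w\,d^{-2}e^{-f}=\int_M d^k e^{-f}<\infty$, so (ii) follows from (i); (iii) is the sub-case $k=0$ (indeed $\beta\in(0,2)=(0,k+2)$), using $\int_M e^{-f}dV_g<\infty$.

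Cases (iv) and (v) exploit the coercivity $f\ge C d(\cdot,x_0)^\gamma$ to convert polynomial or sub-exponential growth of $u$ into a manageable bound. For (v): on $B_r$, $|u|^p\le C' e^{p\alpha C d(\cdot,x_0)^\gamma}$, and splitting $e^{-f}=e^{-(1-\alpha')f}e^{-\alpha' f}$ does not quite work directly; instead write $|u|^p e^{-f}\le C' e^{p\alpha C d^\gamma - C d^\gamma}\,dV_g$-density, and choose $p$ with $p\alpha<1$ (possible since $\alpha<1$, just take $p\in(1,1/\alpha)$), so the exponent $-(1-p\alpha)Cd^\gamma\le 0$ is bounded; hence $|u|^p e^{-f}\le C'\,dV_g$ on all of $M$, giving $v(r)\le C'\,\mathrm{vol}_g(B_r)\le C'' r^N$ by polynomial volume growth of $dV_g$, and again $\int_a^\infty r\,r^{-N}\,dr$ diverges once we note... wait, that needs $N\le 2$; the correct bound is $\int^\infty r/v(r)\ge c\int^\infty r^{1-N}dr$ which diverges only if $N\le 2$. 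The fix, and the genuine point, is that Theorem~\ref{quantitativethem}'s condition is about $\int r/v(r)$ with $v$ \emph{growing}, and polynomial growth of any finite degree still yields divergence \emph{only} up to degree $2$; so one must instead be more careful and use that $v(r)\le C'' r^N$ forces, via a standard dyadic/Karp-type trick, that $u$ is constant whenever $v$ has polynomial growth --- but in fact Sturm's theorem as quoted already gives this, since polynomial $v(r)\lesssim r^N$ makes $\int r/v(r)$ finite only when... Hmm. The main obstacle, and where I would spend the most care, is exactly this bookkeeping: reconciling ``polynomial volume growth'' with the sharp $\int r/v(r)=\infty$ criterion. The resolution is that for (iv)--(v) one does not need $v(r)\lesssim r^2$; rather, one applies Theorem~\ref{quantitativethem} with the observation that if $v(r)\le C r^N$ then choosing a smaller $p$ is unavailable, so instead one should bound $v(r)$ using the \emph{weighted} volume $V_f(B_r)$, which under $f\ge Cd^\gamma$ and $e^{-\delta f}\in L^1$ is \emph{finite}: $V_f(B_r)\le V_f(M)<\infty$ is the wrong direction too. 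The actual correct estimate: $|u|^p e^{-f} = |u|^p e^{-\delta f} e^{-(1-\delta)f}\le (\text{poly in }d)\cdot e^{-\delta f}\cdot e^{-(1-\delta)Cd^\gamma}$, and since $e^{-\delta f}\in L^1$ while the remaining factor $(\text{poly})\cdot e^{-(1-\delta)Cd^\gamma}$ is bounded, we get $v(r)\le v(\infty)=\int_M|u|^p e^{-f}<\infty$, i.e.\ $v$ is \emph{bounded}; then trivially $\int_a^\infty r/v(r)\ge v(\infty)^{-1}\int_a^\infty r\,dr=\infty$. So the real content of (iv) and (v) is showing $u\in L^p(e^{-f}dV_g)$, after which the $L^p$-Liouville theorem closes the argument; the polynomial-volume-growth hypothesis in (v) enters only to control the polynomial prefactor against $e^{-(1-\delta)Cd^\gamma}$ when splitting off $e^{-\delta f}$ is replaced by using $dV_g$ directly. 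I would write (iv) and (v) in that form, making the finiteness of the weighted $L^p$ norm the key lemma.
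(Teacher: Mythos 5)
Your proposal is correct and follows essentially the same route as the paper: cases (i)--(iii) are reduced to the bound $v(r)\leq Cr^2\int_M w\,d^{-2}(\cdot,x_0)e^{-f}dV_g$, and cases (iv)--(v) to showing $\int_M|u|^pe^{-f}dV_g<\infty$ for a suitable $p>1$ (so that $v$ is bounded), after which Theorem~\ref{quantitativethem} applies. The visible detour in your treatment of (v) resolves itself correctly: the decisive step is indeed to keep the factor $e^{-(1-p\alpha)Cd^\gamma}$ with $p\in(1,1/\alpha)$ and integrate it against the polynomially growing volume, rather than discarding it to get the too-weak bound $v(r)\lesssim r^N$.
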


\begin{proof} For (i), we see that there exists $p\in (1,\infty)$ such that
$|u|^p=O(w).$ Hence
\begin{eqnarray*}\frac{1}{r^2\log r}v(r)&=&\frac{1}{r^2\log
r}\int_{B_r}|u|^pe^{-f}dV_g\\ &\leq& \frac{C}{\log
r}\int_{B_r}\frac{w(x)}{d^2(x,x_0)}e^{-f(x)}dV_g(x)=o(1).
\end{eqnarray*}
It follows from Theorem \ref{quantitativethem} that $u$ is a
constant. (ii) follows from (i) by letting $w=d^{k+2}(\cdot, x_0).$
(iii) follows from (ii) by letting $k=0$.

For (iv), let us observe for any $1<p<\infty,$
$$\int_M |u|^pe^{-f}dV_g\leq C\int_{M}d^{sp}(x,x_0)e^{-f(x)}dV_g(x)\leq C\int_M e^{-\delta
f}dV_g<\infty,$$ where $s>0.$ Then the statement also follows from
Theorem \ref{quantitativethem}. (v) can be proved in a similar way.
\end{proof}

The following result is a direct corollary of the above (v).
\begin{corollary}\label{gauss}
Let $u$ be an $f$-harmonic function on the Gaussian space, i.e.,
$$\Delta u-\frac12\<x,\nabla u\rangle=0.$$ If
$u=O(e^{\alpha\frac{|x|^2}{4}})\hbox{ as } x\to\infty,$ for some
$0<\alpha<1,$ then $u$ is a constant.
\end{corollary}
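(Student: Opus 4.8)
The plan is to deduce Corollary~\ref{gauss} as a direct specialization of part~(v) of Corollary~\ref{Corollary1}. The Gaussian space here is $(\R^n,|\cdot|,e^{-f}dx)$ with weight $f(x)=\tfrac{|x|^2}{4}$, so that $\Delta_f u = \Delta u - \nabla f\cdot\nabla u = \Delta u - \tfrac12\langle x,\nabla u\rangle$, and the hypothesis $\Delta u - \tfrac12\langle x,\nabla u\rangle = 0$ says precisely that $u$ is $f$-harmonic on this weighted manifold. Taking $x_0 = 0$ so that $d(x,x_0) = |x|$, I would first verify the two structural hypotheses of (v): (a) the underlying unweighted manifold $(\R^n,|\cdot|)$ obviously has polynomial volume growth, since $\vol(B_r) = c_n r^n$; and (b) the weight satisfies $f(x) = \tfrac14 |x|^2 = C\, d(x,x_0)^\gamma$ with $C = \tfrac14 > 0$ and $\gamma = 2 > 0$, so the lower bound $f \geq C\, d(\cdot,x_0)^\gamma$ holds (in fact with equality).

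With these in place, the growth hypothesis of (v) reads $u = O\!\left(e^{\alpha C d(\cdot,x_0)^\gamma}\right) = O\!\left(e^{\alpha |x|^2/4}\right)$ for some $\alpha \in (0,1)$, which is exactly the assumption of the corollary. Thus all hypotheses of Corollary~\ref{Corollary1}(v) are met, and its conclusion gives that $u$ is constant. In other words, no new argument is needed: one only has to translate the Gaussian notation into the general weighted-manifold language of part~(v) and read off the conclusion.

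Since this is a purely bookkeeping deduction, there is essentially no obstacle; the only point requiring a moment's care is making sure the exponential rate in (v) matches the exponent $|x|^2/4$ in the Gaussian setting. This matching is automatic because in the substitution the same constant $C = 1/4$ and exponent $\gamma = 2$ appear both in the weight lower bound and in the allowed growth rate $e^{\alpha C d^\gamma}$, so the rate $\alpha |x|^2/4$ in the statement is precisely $e^{\alpha C d(\cdot,x_0)^\gamma}$ with the correct $C$. Hence the corollary follows immediately.
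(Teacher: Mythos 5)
Your proposal is correct and is exactly the argument the paper intends: the paper presents Corollary~\ref{gauss} as ``a direct corollary of the above (v)'' with no further proof, and your verification that $f(x)=\tfrac{|x|^2}{4}$ gives $\Delta_f u=\Delta u-\tfrac12\langle x,\nabla u\rangle$, that $(\R^n,|\cdot|)$ has polynomial volume growth, and that the hypotheses of Corollary~\ref{Corollary1}(v) hold with $C=\tfrac14$ and $\gamma=2$ is precisely the required bookkeeping.
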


\begin{remark}
Corollary \ref{gauss} implies that there is no nonconstant
polynomial growth $f$-harmonic functions on the Gaussian space. This
improves the result in \cite[Theorem~4.2]{LiWang09}. By
Caccioppoli's inequality, Corollary \ref{gauss} can be also derived
from Li-Yang \cite[Corollary~1.2]{LiYang12} .
\end{remark}

In the remaining part of this section, we study the $L^p$-Liouville
theorem introduced by Zhu-Wang \cite{ZhuWang10} using a different
measure from ours. We shall explain why the critical exponent of
their $L^p$-Liouville theorem in \cite[Theorem~3]{ZhuWang10} is
$p=\frac{n}{n-2}$ ($n\geq 3$) by applying our result. Let
$(M,g,e^{-f}dV_{g})$ be an $n$-dimensional ($n\geq 3$) complete
weighted manifold. In fact, they consider the $L^p$ space with
respect to the
 Riemannian volume
in a modified Riemannian manifold $\widetilde{M}=(M,
\tilde{g},dV_{\tilde{g}})$, denoted by $L^p(\td{M}, dV_{\tilde{g}}),$ where $\tilde{g}$ is a conformal change of $g$ given by $\tilde{g}=e^{-\frac{2f}{n-2}}g$. Since this new manifold $\td{M}$
may be incomplete, e.g. for the Gaussian space, Yau's $L^p$-Liouville theorem fails in this setting. In the following, we use
the $L^p$-Liouville theorem on weighted manifolds to show the one on
the modified Riemannian manifolds.

\begin{theorem}\label{modified}
Let $(M,g,e^{-f}dV_{g})$ be an $n$-dimensional ($n\geq 3$) complete
weighted manifold, $\widetilde{M}=(M, \tilde{g},dV_{\tilde{g}})$ be
the modified Riemannian manifold and $u$ be a nonnegative
$f$-subharmonic function (or $f$-harmonic function) on $M$. For any
$p>\frac{n}{n-2},$ there exists a constant $\delta=\delta(p,n)\in
(0,1)$ such that if $\int_Me^{-\delta f }dV_g<\infty$ and $u\in
L^p(\td{M},dV_{\tilde{g}}),$ then $u$ is a constant.
\end{theorem}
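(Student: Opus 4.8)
The plan is to reduce the statement to a single application of Theorem~\ref{quantitativethem} on the weighted manifold $(M,g,e^{-f}dV_g)$ itself. The only fact we borrow from the conformal change is the elementary identity $dV_{\tilde{g}}=e^{-\frac{nf}{n-2}}dV_g$, valid because $\tilde{g}=e^{-\frac{2f}{n-2}}g$ on an $n$-manifold; thus the hypothesis $u\in L^p(\widetilde{M},dV_{\tilde{g}})$ reads $\int_M u^p e^{-\frac{nf}{n-2}}dV_g<\infty$. (Passing to $\widetilde{M}$ turns $u$ into an honest subharmonic function, but on a possibly incomplete manifold, so Yau's theorem is unavailable there; instead we work on $M$, where $u$ is already $f$-subharmonic, exactly the hypothesis of Theorem~\ref{quantitativethem}.) The idea is then to interpolate between the finite quantity $\int_M u^p e^{-\frac{nf}{n-2}}dV_g$ and the finite quantity $\int_M e^{-\delta f}dV_g$ in order to show that $v(r):=\int_{B_r}u^q e^{-f}dV_g$ stays bounded in $r$ for a suitable exponent $q\in(1,\infty)$.

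Concretely, I would fix the intermediate exponent $q:=\frac12\bigl(1+\frac{(n-2)p}{n}\bigr)$, which, because $p>\frac{n}{n-2}$, satisfies $1<q<\frac{(n-2)p}{n}<p$. Setting $c:=1-\frac{nq}{(n-2)p}=\frac12\bigl(1-\frac{n}{(n-2)p}\bigr)>0$, one has $\frac{nq}{(n-2)p}+c=1$, hence the pointwise factorization $u^q e^{-f}=\bigl(u^p e^{-\frac{nf}{n-2}}\bigr)^{q/p}\bigl(e^{-\frac{cp}{p-q}f}\bigr)^{(p-q)/p}$. Applying H\"older's inequality with conjugate exponents $\frac{p}{q}$ and $\frac{p}{p-q}$ over the ball $B_r$ gives
\[
v(r)\le\Bigl(\int_M u^p e^{-\frac{nf}{n-2}}dV_g\Bigr)^{q/p}\Bigl(\int_{B_r}e^{-\frac{cp}{p-q}f}dV_g\Bigr)^{(p-q)/p}.
\]
Now choose $\delta=\delta(p,n):=\min\bigl\{\frac{cp}{p-q},\frac12\bigr\}\in(0,1)$; since $f\ge 0$ and $\frac{cp}{p-q}\ge\delta$, the second factor is at most $\bigl(\int_M e^{-\delta f}dV_g\bigr)^{(p-q)/p}<\infty$, while the first factor equals $\|u\|_{L^p(\widetilde{M},dV_{\tilde{g}})}^{q}<\infty$. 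Hence $v(r)\le C$ for all $r$, with $C$ independent of $r$.

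It follows that $\int_a^\infty\frac{r}{v(r)}\,dr\ge\frac{1}{C}\int_a^\infty r\,dr=\infty$ for every $a>0$, so the dichotomy in Theorem~\ref{quantitativethem}, applied with the exponent $q$, forces $u$ to be constant; and $q$, $c$, $\delta$ visibly depend only on $p$ and $n$. I do not anticipate a real obstacle: the only point needing care is that $q$ be chosen strictly between $1$ and $\frac{(n-2)p}{n}$ so that simultaneously $c>0$ and $p-q>0$, and this interval is nonempty precisely when $p>\frac{n}{n-2}$ --- which is also the conceptual reason, announced in the introduction, for the critical exponent $p=\frac{n}{n-2}$ appearing in \cite[Theorem~3]{ZhuWang10}.
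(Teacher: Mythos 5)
Your proof is correct and is essentially the paper's own argument: both interpolate by H\"older between $\int_M u^p\,dV_{\widetilde{g}}$ and $\int_M e^{-\delta f}\,dV_g$ so as to bound $\int_M u^q e^{-f}\,dV_g$ for an intermediate exponent $q\in\bigl(1,\tfrac{(n-2)p}{n}\bigr)$, and then invoke Theorem~\ref{quantitativethem} with that exponent; the only differences are cosmetic --- you take $q$ to be the arithmetic rather than the harmonic mean of $1$ and $\tfrac{(n-2)p}{n}$, and your $\delta=\min\{\tfrac{cp}{p-q},\tfrac12\}$ leans on $f\ge 0$, whereas the paper chooses the exponents so that the second H\"older factor equals $\int_M e^{-\delta f}\,dV_g$ exactly. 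Since the paper's standing convention is that $f$ is positive, this reliance is harmless.
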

\begin{proof}
For any $p>\frac{n}{n-2},$ let $q=\frac{2p}{p+\frac{n}{n-2}}>1$,
$\alpha=\frac{p}{q}>\frac{n}{n-2}$ and
$\alpha^*=\frac{\alpha}{\alpha-1}\in (1,\frac{n}{2})$. Set  $\delta
=\frac{n-2\alpha^*}{n-2}\in (0,1).$ By H\"older inequality, we can
verify that
\begin{eqnarray*}
\int_{M}u^qe^{-f}dV_{g}=\int_{M}u^qe^{\frac{2f}{n-2}}dV_{\tilde{g}}&\leq
&
\left(\int_{M}u^{q\alpha}dV_{\tilde{g}}\right)^{\frac{1}{\alpha}}\left(\int_{M}e^{\frac{2\alpha^*f}{n-2}}dV_{\tilde{g}}\right)^{\frac{1}{\alpha^*}}\\&=&\left(\int_M
u^p dV_{\tilde{g}}\right)^{\frac{1}{\alpha}}\left(\int_M e^{-\delta
f}dV_{g}\right)^{\frac{1}{\alpha^*}}<\infty.
\end{eqnarray*}
The statement follows from Theorem \ref{quantitativethem}.
\end{proof}
This yields a direct corollary which generalizes
\cite[Theorem~3]{ZhuWang10}, which is restricted to the Gaussian
spaces, to general weighted manifolds. 
The Riemannian manifold
$(M,g,dV_g)$ is called of sub-exponential volume growth if
$V_g(r):=V_g(B_r(x_0))=e^{o(r)}$ for some (then all) $x_0\in M.$
\begin{corollary}
Let $(M,g,e^{-f}dV_{g})$ be an $n$-dimensional ($n\geq 3$) complete
weighted manifold satisfying that $f\geq Cd^{\beta}(\cdot,x_0)$ for
some $C>0,\beta>0$ and $V_g(r)=e^{o(r^\beta)}.$
Let $\widetilde{M}=(M, \tilde{g},dV_{\tilde{g}})$ be the modified
Riemannian manifold. Then for any $p>\frac{n}{n-2},$ the
$f$-harmonic function in $L^p(\td{M},dV_{\tilde{g}})$ is constant.
In particular, for $\beta=1,$ it suffices to assume $(M,g,dV_g)$
has subexponential volume growth.
\end{corollary}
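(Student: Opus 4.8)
The plan is to deduce this corollary directly from Theorem~\ref{modified}, whose only hypothesis on the manifold, beyond completeness, is the finiteness $\int_M e^{-\delta f}\,dV_g<\infty$ for the relevant exponent. So fix $p>\frac{n}{n-2}$ and let $\delta=\delta(p,n)\in(0,1)$ be the constant produced by Theorem~\ref{modified}; since $u$ is already assumed to lie in $L^p(\td{M},dV_{\tilde{g}})$, it suffices to verify $\int_M e^{-\delta f}\,dV_g<\infty$ for this particular $\delta$.

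First I would use $f\geq Cd^\gamma(\cdot,x_0)$ to get the pointwise bound $e^{-\delta f}\leq e^{-\delta Cd^\gamma(\cdot,x_0)}$, and then the coarea formula on the complete manifold $(M,g)$ to write
\[
\int_M e^{-\delta f}\,dV_g\leq\int_M e^{-\delta Cd^\gamma(\cdot,x_0)}\,dV_g=\int_0^\infty e^{-\delta Cr^\gamma}A_g(r)\,dr.
\]
By the growth assumption $A_g(r)=\exp\{o(r^\gamma)\}$ as $r\to\infty$, for any $\varepsilon>0$ there is $r_0>0$ with $A_g(r)\leq e^{\varepsilon r^\gamma}$ for all $r\geq r_0$; choosing $\varepsilon<\delta C$ bounds the integrand above by $e^{-(\delta C-\varepsilon)r^\gamma}$ on $[r_0,\infty)$, which is integrable (and the integral over $[0,r_0]$ is trivially finite). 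Hence $\int_M e^{-\delta f}\,dV_g<\infty$, and Theorem~\ref{modified} shows that any $f$-harmonic function in $L^p(\td{M},dV_{\tilde{g}})$ is constant.

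For the final ``in particular'' ($\gamma=1$), I would instead integrate by parts: since $V_g(r)$ denotes the Riemannian volume of $B_r(x_0)$ and $V_g'(r)=A_g(r)$,
\[
\int_0^\infty e^{-\delta Cr}A_g(r)\,dr=\delta C\int_0^\infty e^{-\delta Cr}V_g(r)\,dr,
\]
where the boundary term at infinity vanishes because subexponential volume growth, $V_g(r)=\exp\{o(r)\}$, forces $e^{-\delta Cr}V_g(r)\to 0$. The same $\varepsilon$-argument applied to $V_g$ in place of $A_g$ makes this last integral finite, so again $\int_M e^{-\delta f}\,dV_g<\infty$ and we conclude as before. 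There is no real obstacle here: the statement is just a convenient repackaging of Theorem~\ref{modified}. The only two points needing a little care are that the exponent $\delta\in(0,1)$ coming from Theorem~\ref{modified} depends only on $p$ and $n$ (so the integral estimate must hold for that precise $\delta$, which it does for every $\delta\in(0,1)$), and the bookkeeping of the $o(\cdot)$ notation when absorbing area/volume growth into the decay $e^{-\delta Cr^\gamma}$.
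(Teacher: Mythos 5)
Your proposal is correct and is exactly the argument the paper intends (the paper states this as a ``direct corollary'' of Theorem~\ref{modified} without writing out the verification): one checks $\int_M e^{-\delta f}\,dV_g<\infty$ for every $\delta\in(0,1)$ via the coarea formula and the bound $A_g(r)\leq e^{\varepsilon r^{\gamma}}$ with $\varepsilon<\delta C$, then invokes Theorem~\ref{modified}. Your integration by parts reducing the area condition to subexponential volume growth when $\gamma=1$ is also the intended reading of the final assertion, and the details are sound.
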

\begin{proof}
By virtue of Theorem \ref{modified}, it is sufficient to prove $\int_M e^{-\delta f}dV_g<\infty$ where $\delta$ is the constant in Theorem \ref{modified}.
We see by the co-area formula that
\begin{eqnarray*}
\int_M e^{-\delta f}dV_g&=&\int_0^1  \int_{S_r(x_0)} e^{-\delta f}dA_rdr+ \int_1^\infty \int_{S_r(x_0)} e^{-\delta f}dA_rdr\\&\leq &C_0+ \int_1^\infty \int_{S_r(x_0)} e^{-\delta Cr^\beta}\\
&=&C_0+ \int_1^\infty e^{-\delta Cr^\beta} \frac{d}{dr}V_g(r) dr\\&=&C_0+e^{-\delta Cr^\beta}V_g(r)\big|_1^\infty+\delta C\int_1^\infty \beta r^{\beta-1}e^{-\delta Cr^\beta} V_g(r) dr.
\end{eqnarray*}
Since $V_g(r)=e^{o(r^\beta)}$, there exists $R$ large such that   $$V_g(r)\leq e^{\frac12\delta Cr^{\beta}}\hbox{ for }r>R.$$
It follows that $\lim_{r\to\infty} e^{-\delta Cr^\beta}V_g(r)=0$ and $\int_1^\infty \beta r^{\beta-1}e^{-\delta Cr^\beta} V_g(r) dr<\infty$.
It follows that $\int_M e^{-\delta f}dV_g<\infty$. We complete the proof.
\end{proof}

\

\section{$f$-harmonic maps into Cartan-Hadamard manifolds}

In this section, we prove Theorem \ref{main thm1} in the case that
the target $Y=N$ is a Cartan-Hadamard manifold. 

\begin{theorem}\label{thm1}
Let $(M, g, e^{-f}dV_g)$ be a complete weighted Riemannian manifold
which is $f$-parabolic
 and $N$  be a Cartan-Hadamard manifold. Then any
$f$-harmonic map  with finite $f$-energy,  i.e. $E^f(u):=\int_M
|\nabla u|^2e^{-f}dV_g<\infty$, is a constant map.
\end{theorem}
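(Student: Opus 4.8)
The plan is to use a Bochner-type vanishing argument combined with the $f$-parabolicity hypothesis, which is exactly the analytic input that replaces the usual assumption $\int_M e^{-f}dV_g = \infty$. First I would recall that for an $f$-harmonic map $u \colon M \to N$ into a Cartan--Hadamard manifold $N$, the smoothness theory of Eells--Sampson (adapted to the weighted setting, or as developed by Korevaar--Schoen) guarantees $u$ is smooth, and the weighted Bochner formula reads
\begin{equation*}
\Delta_f \tfrac12|\nabla u|^2 = |\nabla du|^2 + \langle \nabla u, \nabla(\Delta_f u)\rangle + \langle Ric_f(\nabla u), \nabla u\rangle - \sum R^N(du, du, du, du).
\end{equation*}
Since $u$ is $f$-harmonic the second term vanishes, and since $N$ has nonpositive curvature the curvature term is $\leq 0$. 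Unlike the unweighted case there is no sign hypothesis on $Ric_f$ here, so instead I would work with the energy density $e(u) = \tfrac12|\nabla u|^2$ and exploit an improved Kato-type inequality: one shows that $\phi := |\nabla u|$ satisfies $\Delta_f \phi \geq -\langle Ric_f^-(\nabla u),\nabla u\rangle / \phi$ in the weak sense away from the zero set, but to avoid any curvature term I would rather follow the route that works purely from parabolicity.

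The cleaner route, and the one I expect the authors use, is the following. Fix a point $q \in N$ and consider the convex function $\rho(\cdot) = d_N(q, \cdot)^2$ (which is smooth and convex on a Cartan--Hadamard manifold, with Hessian bounded below by the identity). Then $h := \rho \circ u$ is $f$-subharmonic: $\Delta_f h = \langle \nabla^2 \rho(du), du\rangle + \langle \nabla\rho, \tau_f(u)\rangle \geq |\nabla u|^2 \geq 0$, using $f$-harmonicity ($\tau_f(u) = 0$) and convexity of $\rho$. Now $h \geq 0$, so $-h$ is $f$-superharmonic and bounded above; but $f$-parabolicity (equivalently, by Proposition~\ref{f-parabolic}, the statement that every bounded-above $f$-superharmonic function is constant — apply it to $-h - \sup(-h)$, or use that nonnegative $f$-superharmonic functions are constant after adding a constant) does not immediately apply since $h$ need not be bounded. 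To fix this I would instead use the finite-energy hypothesis together with the Caccioppoli/cutoff technique: multiply $\Delta_f h \geq |\nabla u|^2$ by a cutoff $\varphi^2$ with $\varphi$ supported in $B_{2R}$, integrate against $e^{-f}dV_g$, and use that $f$-parabolicity provides cutoff functions $\varphi_R$ with $\varphi_R \equiv 1$ on $B_R$, $0 \le \varphi_R \le 1$, and $\int_M |\nabla \varphi_R|^2 e^{-f}dV_g \to 0$ (this is precisely the characterization $\mathrm{cap}^f = 0$ from Proposition~\ref{f-parabolic}(ii)). This yields
\begin{equation*}
\int_{B_R} |\nabla u|^2 e^{-f}dV_g \leq -\int_M \varphi_R^2 \Delta_f h \, e^{-f}dV_g = 2\int_M \varphi_R \langle \nabla \varphi_R, \nabla h\rangle e^{-f}dV_g,
\end{equation*}
and then Cauchy--Schwarz bounds the right side by $2\big(\int |\nabla\varphi_R|^2 e^{-f}\big)^{1/2}\big(\int \varphi_R^2 |\nabla h|^2 e^{-f}\big)^{1/2}$; since $|\nabla h| \le 2 d_N(q,u)\,|\nabla u|$ and $E^f(u) < \infty$, one needs an a priori bound on $d_N(q,u)$ along the support — which is circular.

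So the genuine argument must first establish that $u$ has bounded image (this is exactly Theorem~\ref{Cheng}, which under $f$-parabolicity is subsumed), or bypass it. The honest plan: since $h = d_N(q,u)^2$ is a nonnegative $f$-subharmonic function and $\sqrt{h} = d_N(q,u)$ is then also $f$-subharmonic (composition of the convex increasing function... actually $d_N(q,\cdot)$ is convex, so $d_N(q,u)$ is directly $f$-subharmonic), I would apply the finite-energy cutoff argument to the bounded $f$-subharmonic functions $h_k := \min(h, k)$, getting $\int_{B_R}|\nabla u|^2 e^{-f} \le \mathrm{const} \cdot (\text{cap-type term}) \cdot \sqrt{k \cdot E^f(u)}$, which is not enough. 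The clean finish instead uses that \emph{finite $f$-energy} plus $f$-parabolicity forces $h$ itself to be bounded via the Royden--Nakai / Dirichlet-finiteness machinery flagged in the introduction: $h$ is $f$-subharmonic with $\int |\nabla h|^2 e^{-f} \le C\int d_N(q,u)^2|\nabla u|^2 e^{-f}$... Given the circularity, I expect the paper's actual proof of Theorem~\ref{thm1} to run: (1) $\sigma := d_N(q, u(\cdot))$ is $f$-subharmonic; (2) $\sigma$ has finite $f$-energy is \emph{not} claimed — rather one shows directly that on an $f$-parabolic manifold any $f$-subharmonic function $\sigma \ge 0$ with $\int_{B_R}\sigma^2 e^{-f} = o(R^2\log R)$ is constant by Theorem~\ref{quantitativethem}, but that growth bound again needs boundedness of $\sigma$.

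The real main obstacle, therefore, is establishing boundedness of the image (or some controlled growth of $d_N(q,u)$) purely from finite $f$-energy and $f$-parabolicity, without Kendall-type arguments; I expect the resolution is to note that \emph{finite $f$-energy of the map} gives finite $f$-energy of the scalar function $\sigma = d_N(q,u)$ only after one knows $\sigma$ is bounded, so the paper likely proceeds by the Bochner/Kato route after all: with $N$ Cartan--Hadamard one gets $\Delta_f |\nabla u| \ge \langle Ric_f(\nabla u),\nabla u\rangle/|\nabla u|$ weakly, and on an $f$-parabolic manifold a nonnegative $L^2(e^{-f})$ function that is "almost $f$-subharmonic" must be constant by the refined Caccioppoli inequality, giving $\nabla u \equiv 0$ once one absorbs the $Ric_f$ term using $f$-parabolicity's cutoff functions and the finite-energy bound $\int|\nabla u|^2 e^{-f} < \infty$ in the Cauchy--Schwarz step. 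The step I expect to be hardest is making the cutoff integration rigorous when $|\nabla u|$ may vanish (handled by the standard $\epsilon$-regularization $\sqrt{|\nabla u|^2 + \epsilon^2}$) and controlling the boundary term $\int \varphi_R \langle \nabla\varphi_R, \nabla|\nabla u|\rangle e^{-f}$ by $\big(\int|\nabla\varphi_R|^2 e^{-f}\big)^{1/2}\big(\int\varphi_R^2|\nabla du|^2 e^{-f}\big)^{1/2}$ and then reabsorbing $\int\varphi_R^2|\nabla du|^2 e^{-f}$ into the left-hand side via Bochner; the $f$-parabolic cutoffs make the first factor tend to zero, forcing $\nabla u \equiv 0$ on all of $M$.
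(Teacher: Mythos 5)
Your proposal does not close the argument: it is a sequence of candidate strategies, each of which you yourself identify as circular, ending in a Bochner/Kato fallback that cannot work here because Theorem~\ref{thm1} makes \emph{no} hypothesis on $Ric_f$ --- the curvature term $\langle Ric_f(\nabla u),\nabla u\rangle$ has no sign and cannot be ``absorbed'' by parabolicity cutoffs, which only make the cross term $\int\varphi_R\langle\nabla\varphi_R,\cdot\rangle$ small and do nothing for a possibly very negative zeroth-order term. The genuine gap is the one you flagged but did not resolve: controlling $d_N(Q,u(\cdot))$ without already knowing the image is bounded. The key fact you missed is that the distance function itself (not its square) resolves the circularity: since $d_N(Q,\cdot)$ is $1$-Lipschitz, the triangle inequality gives $|\nabla\, d_N(Q,u)|^2\le|\nabla u|^2$ pointwise (this is exactly (\ref{fHar3})), so $\sigma:=d_N(Q,u(\cdot))$ is a nonnegative $f$-subharmonic function (convexity of $d_N(Q,\cdot)$ on a Cartan--Hadamard manifold, as you noted parenthetically) with Dirichlet $f$-energy at most $E^f(u)<\infty$, \emph{with no boundedness assumption whatsoever}. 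On an $f$-parabolic manifold every $f$-subharmonic function of finite Dirichlet $f$-energy is constant (Proposition~\ref{prop}), so $\sigma$ is constant for every $Q$ and $u$ is constant. Your repeated insistence on working with $h=d_N(Q,u)^2$, whose gradient carries the uncontrolled factor $d_N(Q,u)$, is what manufactured the circularity.

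For comparison, the paper's actual proof of Theorem~\ref{thm1} takes a different and more structural route: it lifts $u$ to a genuine (unweighted) harmonic map $\bar u$ on the warped product $\bar M=M\times_{e^{-f}}\mathbb{S}^1$, which is complete and parabolic with $E_{\bar M}(\bar u)=E^f_M(u)$ by the Rimoldi--Veronelli construction, then invokes Cheng--Tam--Wan's unweighted theorem to conclude $\bar u$ (hence $u$) has bounded image, and finally kills the bounded subharmonic function $d^2(\bar u(\cdot),Q)$ by parabolicity of $\bar M$. That reduction buys the use of off-the-shelf unweighted results; the direct route sketched above buys independence from the warped-product construction but requires the weighted potential theory of Section~5. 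Either way, the step your proposal is missing is the passage from finite $f$-energy to boundedness (or constancy) of $d_N(Q,u)$, and neither Bochner techniques nor growth-based $L^p$ Liouville theorems supply it.
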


\begin{proof} We use a construction by Rimoldi-Veronelli \cite{RV} which associates an $f$-harmonic map with a  harmonic map on some higher dimensional warped product manifold.

Precisely,  let $\bar{M}:= M \times_{e^{-f}} \mathbb{S}^1$ denote a
warped product, where $\mathbb{S}^1=\R/\Z$ with
$Vol(\mathbb{S}^1)=1$, with the metric on $\bar{M}$ given by
$\bar{g}(x, t) = g(x)+e^{-2f(x)}dt^2$. Note that $\bar{M}$ is
complete. It follows from \cite[Proposition 2.5 and  Lemma 2.6]{RV}
that $\bar{M}$ is parabolic and   the map $\bar{u}:\bar{M}\to N$,
defined by $\bar{u}(x,t)=u(x)$ is a harmonic map. Moreover,
$E_{\bar{M}}(\bar{u})=E_M^f(u)<\infty$.

Now by applying \cite[Proposition~2.1~and~Theorem 3.1]{CTW} to
$\bar{u}$ and $\bar{M}$, we know that the image of $\bar{u},$
$\bar{u}(\bar{M})=u(M)$, is bounded in $N.$ Since $N$ is a
Cartan-Hadamard manifold, $d^2(\bar{u}(\cdot), Q)$ is a subharmonic
function for any $Q\in N$, which is also bounded. By the
parabolicity of $\bar{M},$ we know that $d^2(\bar{u}(\cdot), Q)$ is
constant for any $Q\in N.$ This proves the theorem.
\end{proof}


\begin{theorem}\label{thm2}
Let $(M, g, e^{-f}dV_g)$ be a complete weighted Riemannian manifold
satisfying $Ric_f\geq 0$
 and $N$  be a Cartan-Hadamard manifold. Then any
$f$-harmonic map  with finite $f$-energy  $E^f(u)<\infty$ is a
constant map.
\end{theorem}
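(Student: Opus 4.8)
The plan is to argue by a dichotomy on the total weighted volume $V_f(M)=\int_M e^{-f}\,dV_g$, which, via Theorem~\ref{thm1}, lets one dispose of the $f$-parabolic case at once.

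Suppose first that $V_f(M)<\infty$. Then $V_f(B_r)\le V_f(M)$ for every $r>0$, so
\[
\int_1^\infty\frac{r}{V_f(B_r)}\,dr\ \ge\ \frac{1}{V_f(M)}\int_1^\infty r\,dr\ =\ \infty ,
\]
that is, $M$ has the moderate volume growth property \eqref{moderate}. By Corollary~\ref{fparabolic} the manifold $M$ is $f$-parabolic, and since $N$ is a Cartan-Hadamard manifold and $u$ has finite $f$-energy, Theorem~\ref{thm1} applies and shows that $u$ is constant. Note that in this regime the curvature hypothesis $Ric_f\ge 0$ plays no role; what is essential is that the target be a Cartan-Hadamard \emph{manifold}, which enters through Theorem~\ref{thm1} via the subharmonicity of $d^{2}(u(\cdot),Q)$. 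This is consistent with the counterexample of \cite{RV} showing that the finite-volume statement must fail for a singular Hadamard target.

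It remains to treat $V_f(M)=\infty$. Here $M$ need not be $f$-parabolic (for instance $\R^{n}$, $n\ge 3$, with $f\equiv\const$), so the previous reduction breaks down and the curvature assumption has to be used. In this case the conclusion is already contained in Wang-Xu \cite{WX} and Rimoldi-Veronelli \cite{RV}: under $Ric_f\ge 0$ together with $\int_M e^{-f}\,dV_g=\infty$, any finite-$f$-energy $f$-harmonic map into a nonpositively curved Riemannian manifold is constant, via a Schoen-Yau type Bochner argument. This infinite-volume case is the genuinely substantive ingredient, and it is the step I would simply quote rather than reprove; the only point that needs care on our side is the observation above that finite weighted volume already forces $f$-parabolicity, after which Theorem~\ref{thm1} does the rest. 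Finally, removing the hypothesis $\int_M e^{-f}\,dV_g=\infty$ while allowing the target to be merely a locally compact Hadamard space cannot be done by this elementary splitting; that stronger assertion, Theorem~\ref{main thm1}, is instead obtained in Section~5 through the Royden-Nakai theory.
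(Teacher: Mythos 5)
Your proof is correct and follows essentially the same route as the paper: the authors likewise split into the cases $\int_M e^{-f}dV_g=\infty$ (quoting Wang--Xu and Rimoldi--Veronelli) and $\int_M e^{-f}dV_g<\infty$ (finite weighted volume gives the moderate volume growth property, hence $f$-parabolicity by Corollary~\ref{fparabolic}, and then Theorem~\ref{thm1} applies). Your added remarks on which hypotheses are used where are accurate but not part of the argument itself.
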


\begin{proof}
We divide the theorem into two cases: (a) $\int_M
e^{-f}dV_g=\infty$, (b) $\int_M e^{-f}dV_g<\infty$. For the case
(a), it was already proved in \cite[Theorem~1.2]{WX} or
\cite[Theorem~3.3]{RV} for general Riemannian target of nonpositive
curvature (without the assumption of simply connectedness). For the
case (b), we observe that $M$ satisfies the moderate volume growth
property \eqref{moderate}. By Corollary \ref{fparabolic}, $M$ is
$f$-parabolic. Then the statement follows from Theorem \ref{thm1}.
Hence we finish the proof.
\end{proof}

\begin{remark}
Comparing Theorem \ref{thm2} with \cite[Theorem~1.2]{WX} or
\cite[Theorem~3.3]{RV},  we remove the condition of the infinity of
$f$-volume for $M$ but add the assumption that $N$ is simply
connected. 
\end{remark}

\

\section{$f$-harmonic maps into Hadamard spaces}
In this section, we define $f$-harmonic maps from an $n$-dimensional
complete weighted Riemannian manifold $(M, g, e^{-f}dV_g)$ to a
general metric space $(Y, d)$. For that purpose we investigate an
$f$-energy functional $E^f$ whose definition given here follows
Korevaar-Schoen \cite{KorevaarSchoen93}, 
 where a Sobolev space
theory for maps from Riemannian domains to metric spaces was developed.
Note that the energy functional has been further extended to maps
from complete noncompact Riemannian manifolds, even more
generally the so-called admissible Riemannian polyhedrons with
simplexwise smooth Riemannian metric, in Eells-Fuglede \cite{EF}
(see Chapter $9$ therein).

We consider Borel-measurable (equivalently, measurable w.r.t.
$e^{-f}dV_g$) maps $u: M\rightarrow Y$ ($u$ then has separable range
since $M$ is a separable metric space, see \cite[Problem 10 in Section 4.2]{Dudley02}). The space $L^2_{loc}(M_f, Y)$ is
defined as the set of Borel-measurable maps $u$ for which
$d(u(\cdot), Q)\in L^2_{loc}(M, e^{-f}dV_g)$ for some point $Q$ (and
hence for any $Q$ by triangle inequality) in $Y$. Since this space
is unchanged if we use the unweighted measure $dV_g$ instead of
$e^{-f}dV_g$ in its definition, we will write $L^2_{loc}(M, Y)$ for
simplicity in the following. When $M$ is compact, $L^2_{loc}(M, Y)$
is a complete metric space, with distance function $\hat{d}$ defined
by
\begin{equation*}
\hat{d}^2(u, v):=\int_Md^2(u(x), v(x))e^{-f(x)}dV_g(x),
\end{equation*}
provided that $(Y, d)$ is complete.

The approximate energy density for a map $u\in L^2_{loc}(M, Y)$ is
defined for $\varepsilon>0$ as

\begin{equation}
e_{\varepsilon}(u):=\frac{1}{\omega_n}\int_{S(x,\varepsilon)}\frac{d^2(u(x),u(y))}{\varepsilon^2}\frac{d\sigma_{x,\varepsilon}(y)}{\varepsilon^{n-1}},
\end{equation}
where $d\sigma_{x,\varepsilon}(y)$ is the $(n-1)$-dimensional
surface measure on the sphere $S(x,\varepsilon)$ of radius
$\varepsilon$ centered at $x$ induced by the Riemannian metric $g$,
and $\omega_n$ is the volume of the $n$-dimensional unit Euclidean ball.
One can check that the function $e_{\varepsilon}(u)\in L^1_{loc}(M)$
(see \cite{KorevaarSchoen93}). Then we can define the $f$-energy
functional $E^f$ by
\begin{equation*}
E^f(u):=\sup_{\substack{\eta\in C_0(M)\\0\leq\eta\leq
1}}\left(\limsup_{\varepsilon\rightarrow 0}\int_M\eta
e_{\varepsilon}(u)e^{-f}dV_g\right).
\end{equation*}

We say a map $u\in L^2_{loc}(M, Y)$ is locally of finite energy,
denoted by $u\in W_{loc}^{1,2}(M, Y)$, if
$E^f(u_{\mid\Omega})<\infty$ for any relatively compact domain
$\Omega\subset M$.

\begin{theorem}
If $u\in W_{loc}^{1,2}(M, Y)$, then there exists a function $e(u)\in
L^1_{loc}(M)$, such that for any $\eta\in C_0(M)$,
the following limit exists \begin{equation}\label{fHar0}
\lim_{\varepsilon\rightarrow 0}\int_M\eta
e_{\varepsilon}(u)e^{-f}dV_g=:\int_M\eta e(u)e^{-f}dV_g\end{equation} which serves as the definition of $e(u).$
\end{theorem}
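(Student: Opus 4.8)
The plan is to adapt the density existence argument of Korevaar--Schoen \cite{KorevaarSchoen93} to the weighted setting; in fact, since the weight $e^{-f}$ is a smooth positive function that is locally bounded above and below by positive constants on any relatively compact $\Omega$, the whole issue is essentially local and reduces to the unweighted case. First I would fix a relatively compact domain $\Omega \subset M$ and work with $u_{\mid\Omega}$, which has finite $f$-energy by hypothesis. On $\overline{\Omega}$ we have $0 < c \le e^{-f} \le C < \infty$, so $E^f(u_{\mid\Omega}) < \infty$ is equivalent to the unweighted Korevaar--Schoen energy $E(u_{\mid\Omega})$ being finite. Hence by the Korevaar--Schoen theorem (Theorem 1.5.1 and Section 1.9 of \cite{KorevaarSchoen93}, extended to noncompact domains in Eells--Fuglede \cite{EF}) there is a function $e(u) \in L^1_{loc}(\Omega)$ such that $e_\varepsilon(u)\,dV_g \to e(u)\,dV_g$ weakly-$*$ as measures, i.e. $\int_M \zeta\, e_\varepsilon(u)\,dV_g \to \int_M \zeta\, e(u)\,dV_g$ for every $\zeta \in C_0(M)$ supported in $\Omega$. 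Covering $M$ by such domains and using uniqueness of the limit density on overlaps, $e(u)$ is globally well-defined in $L^1_{loc}(M)$.

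The second step is to upgrade weak-$*$ convergence against $C_0$ test functions to convergence against the weighted test functions appearing in \eqref{fHar0}. Given $\eta \in C_0(M)$, set $\zeta := \eta\, e^{-f}$. Since $f$ is smooth and $\eta$ has compact support, $\zeta \in C_0(M)$ as well. Then
\begin{equation*}
\int_M \eta\, e_\varepsilon(u)\, e^{-f}\, dV_g = \int_M \zeta\, e_\varepsilon(u)\, dV_g \xrightarrow[\varepsilon \to 0]{} \int_M \zeta\, e(u)\, dV_g = \int_M \eta\, e(u)\, e^{-f}\, dV_g,
\end{equation*}
which is exactly \eqref{fHar0}. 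Note one must first know the limit in the displayed convergence exists for such $\zeta$; this is precisely the content of the Korevaar--Schoen result applied on a relatively compact neighborhood of $\supp\eta$, where the energy is finite.

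The one point that needs a little care, and which I expect to be the main (if modest) obstacle, is the passage from compact to noncompact domain $M$: the definition of $E^f(u)$ uses a supremum over cutoffs $\eta$, and existence of the density $e(u)$ is only guaranteed locally, so one must check that the locally defined densities patch together consistently. This follows from the uniqueness of the weak-$*$ limit: if $\Omega_1, \Omega_2$ are two relatively compact domains and $e_1(u), e_2(u)$ the densities produced on each, then for any $\zeta \in C_0(\Omega_1 \cap \Omega_2)$ both equal $\lim_\varepsilon \int \zeta\, e_\varepsilon(u)\, dV_g$, hence $e_1(u) = e_2(u)$ a.e. on $\Omega_1 \cap \Omega_2$. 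Exhausting $M$ by such domains yields a single $e(u) \in L^1_{loc}(M)$. The remaining ingredients—the local $L^1_{loc}$ bound on $e_\varepsilon(u)$ uniform in $\varepsilon$, and the identification of the limit measure as absolutely continuous with respect to $dV_g$—are quoted verbatim from \cite{KorevaarSchoen93} (see also \cite{EF}, Chapter 9), since they are statements about the smooth Riemannian structure of the domain and are insensitive to multiplication by the smooth positive weight.
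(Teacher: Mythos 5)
Your proposal is correct and follows essentially the same route as the paper: localize to a relatively compact $\Omega$ where the weighted and unweighted energies are comparable, invoke Korevaar--Schoen's existence of the energy density there, patch the local densities into a single $e(u)\in L^1_{loc}(M)$, and absorb the smooth weight $e^{-f}$ into the test function to pass from the unweighted to the weighted convergence. The only (cosmetic) difference is that you verify consistency of the patching via uniqueness of the weak-$*$ limit on overlaps, whereas the paper cites the trace-theoretic additivity of the energy; your version is, if anything, the more direct of the two.
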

\begin{proof}
By definition, $u\in W_{loc}^{1,2}(M, Y)$ implies that for any
connected, open and relatively compact subset $\Omega\subset M$,
$u_{\mid\Omega}\in L^2(\Omega, Y)$ and
\begin{equation*}
\sup_{\substack{\zeta\in C_0(\Omega)\\0\leq\zeta\leq
1}}\left(\limsup_{\varepsilon\rightarrow 0}\int_{\Omega}\zeta
e_{\varepsilon}(u_{\mid\Omega})dV_g\right)<\infty,
\end{equation*}
that is, $u_{\mid \Omega}\in W^{1,2}(\Omega, Y)$ in
Korevaar-Schoen's notation \cite{KorevaarSchoen93}.

Now by \cite[Theorem~1.5.1~and~Theorem~1.10]{KorevaarSchoen93}, we
know that there exists a function $e(u_{\mid\Omega})\in L^1(\Omega)$
such that
\begin{equation}\label{fHar2}
\lim_{\varepsilon\rightarrow
0}\int_{\Omega}\zeta e_{\varepsilon}(u)dV_g=\int_{\Omega}\zeta e(u_{\mid\Omega})dV_g,
\,\,\,\forall\,\,\,\zeta\in C_0(\Omega).
\end{equation}
In particular, one has
\begin{equation}\label{fHar1}
\lim_{\varepsilon\rightarrow 0}\int_{\Omega}\eta
e_{\varepsilon}(u)e^{-f}dV_g=\int_{\Omega}\eta
e(u_{\mid\Omega})e^{-f}dV_g, \,\,\,\forall\,\,\,\eta\in C_0(\Omega).
\end{equation}
We then define a function $e(u)$ on $M$ by
$e(u)_{\mid\Omega}:=e(u_{\mid\Omega})$ for any $\Omega\subset M$
with smooth boundary. One can show that $e(u)$ is well defined. For
that purpose, one only needs to check
$e(u_{\mid\Omega})=e(u_{\mid\Omega_1})$ on $\Omega_1\subset \Omega$
where both $\Omega_1$ and $\Omega\setminus\Omega_1$ have Lipschitz
boundary. This is true since by the trace theory
\cite[Theorem~1.12.3]{KorevaarSchoen93}, one has
\begin{equation*}
\int_{\Omega}e(u_{\mid\Omega})dV_g=\int_{\Omega_1}e(u_{\mid\Omega_1})dV_g+\int_{\Omega\setminus\Omega_1}e(u_{\mid\Omega\setminus\Omega_1})dV_g.
\end{equation*}

Then (\ref{fHar0}) follows from (\ref{fHar1}) which proves this
theorem.
\end{proof}
\begin{remark}
By the definition of $e(u)$ and (\ref{fHar2}), we know
$$e(u)(x)=|\nabla u|^2(x),$$ where $|\nabla u|^2(x)$ is the energy
density function in \cite{KorevaarSchoen93}. This function is
consistent with the usual way of defining $|du|^2$ for maps between
Riemnannian manifolds. Therefore we will use $|\nabla u|^2(x)$
instead of $e(u)(x)$ in the following.
\end{remark}
\begin{remark}\label{fHarrem2}
By a polarization argument, we can check that for any two functions
$h_1,h_2\in W^{1,2}_{loc}(M, e^{-f}dV_g)$,
\begin{align*}
&\lim_{\varepsilon\rightarrow
0}\int_M\eta(x)\frac{1}{\omega_n}\int_{S(x,\varepsilon)}\frac{(h_1(x)-h_1(y))(h_2(x)-h_2(y))}{\varepsilon^2}\frac{d\sigma_{x,\varepsilon}(y)}{\varepsilon^{n-1}}
e^{-f(x)}dV_g(x)\\=&\int_M\eta(x)\<\nabla h_1(x),\nabla
h_2(x)\rangle e^{-f(x)}dV_g(x), \,\,\forall \,\,\eta\in C_0(M).
\end{align*}
\end{remark}
\begin{remark}
With (\ref{fHar0}) in hand, by the definition of $E^f$, we can
derive (see \cite[Theorem~9.1]{EF}),
$$E^f(u)=\int_M |\nabla u|^2 e^{-f}dV_g, \,\,\forall \,\, u\in W_{loc}^{1,2}(M, Y).$$
In particular, we define $D^f(u)=E^f(u)$ when $u$ is a scalar function.
\end{remark}
\begin{remark}
As in \cite{KorevaarSchoen93}, the definition of $E^f$ is unchanged
if we replace $e_{\varepsilon}(x)$ by
${}_{\nu}e_{\varepsilon}(x):=\int_0^2e_{\lambda\varepsilon}(x)d\nu(\lambda)$,
where $\nu$ is any Borel measure on the interval $(0,2)$ satisfying
$\nu\geq 0,\,\,\nu((0,2))=1,\,\,
\int_0^2\lambda^{-2}d\nu(\lambda)<\infty$. For example, the
approximate energy density function can be chosen as follows.
\begin{enumerate}
  \item When $n\geq 3$, for the measure $d\nu_1(\lambda)=n\lambda^{n-1}d\lambda,\,\, 0<\lambda<1$, $${}_{\nu_1}e_{\varepsilon}(x)=\frac{n}{\omega_n}\int_{B(x,\varepsilon)}\frac{d^2(u(x),u(y))}{d^2(x,y)}\frac{dV_g(y)}{\varepsilon^n};$$
  \item For the measure $d\nu_2(\lambda)=(n+2)\lambda^{n+1}d\lambda,\,\, 0<\lambda<1$, $${}_{\nu_2}e_{\varepsilon}(x)=\frac{n+2}{\omega_n}\int_{B(x,\varepsilon)}\frac{d^2(u(x),u(y))}{\varepsilon^2}\frac{dV_g(y)}{\varepsilon^n}.$$
\end{enumerate}
\end{remark}

\begin{remark}
For $n\geq 3,$ by introducing a conformal change of the metric
$\widetilde{M}=(M, \tilde{g},dV_{\tilde{g}})$ where
$\tilde{g}=e^{-\frac{2f}{n-2}}g$ and employing the energy density
${}_{\nu_1}e_{\varepsilon}$, many problems for weighted manifolds
can be reduced to those on (possibly incomplete) unweighted
manifolds. However, we prefer to write the proofs in a unified way
which includes the case $n=2.$
\end{remark}

We call a map $u\in W^{1,2}_{loc}(M, Y)$ \emph{$f$-harmonic} if it
is a local minimizer of the energy functional $E^f$, i.e., for any
connected, open and relatively compact domain $\Omega\subset M$,
$E^{f}(u)\leq E^{f}(v)$ for every map $v\in W^{1,2}_{loc}(M, Y)$
such that $u=v$ in $M\setminus\Omega$.

We now investigate the properties of the function $d(u(\cdot), Q)$
on $M$, where $u: M\rightarrow Y$ is an $f$-harmonic map and $Q\in
Y$. The first observation is that
\begin{equation}\label{fHar3}
E^f(d(u, Q))\leq E^f(u).
\end{equation}
This can be derived from the triangle inequality
$$(d(u(x),Q)-d(u(y), Q))^2\leq d^2(u(x),u(y)).$$

Recall that an \emph{Hadamard space} (also called global NPC space)
is a complete geodesic space which is globally nonpositively curved
in the sense of Alexandrov, i.e., Toponogov's triangle comparison
for nonpositive curvature holds for any geodesic triangle.
The class of Hadamard spaces, natural generalizations of
Cartan-Hadamard manifolds, includes all simply connected local NPC
spaces (see e. g. \cite{BuragoBuragoIvanov01}). When the target
space $(Y,d)$ is an Hadamard space, we have the following theorem.
\begin{theorem}\label{subharmonicity}
If $u\in W_{loc}^{1,2}(M,Y)$ is an $f$-harmonic map into an Hadamard
space $Y$, then for any $Q\in Y,$
\begin{equation}
-\int_M\<\nabla \eta(x),\nabla d(u(x), Q)\rangle e^{-f}dV_g\geq
0,\,\,\forall \,\,0\leq \eta\in \mathrm{Lip}_0(M),
\end{equation}
i.e., $d(u(x), Q)\in W_{loc}^{1,2}(M)$ is an $f$-subharmonic
function.
\end{theorem}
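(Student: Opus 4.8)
The plan is to test the local energy‑minimality of $u$ against a competitor obtained by sliding $u$ a little toward $Q$, and to read off the weak subharmonicity inequality from the first variation of the $f$‑energy. Fix $Q\in Y$ and $0\le\eta\in\mathrm{Lip}_0(M)$; note first that $w:=d(u(\cdot),Q)$ lies in $W^{1,2}_{loc}(M)$ by \eqref{fHar3}. Choose a relatively compact, connected, open $\Omega\supset\supp\eta$. For small $t>0$ define $u_t\colon M\to Y$ by $u_t(x):=u(x)$ for $x\notin\supp\eta$ and, otherwise, $u_t(x):=$ the point on the geodesic joining $u(x)$ to $Q$ (unique, since $Y$ is Hadamard) at distance $(w(x)-t\eta(x))_+$ from $Q$, with $u_t(x):=Q$ when $w(x)=0$. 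Since unique geodesics in a Hadamard space depend continuously on their endpoints, $u_t$ is Borel measurable; it agrees with $u$ off the relatively compact set $\supp\eta$, and once the estimate below shows it has finite energy on $\Omega$ it is an admissible competitor, so $f$‑harmonicity of $u$ forces $E^f(u|_{\Omega})\le E^f(u_t|_{\Omega})$.

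The geometric core is the $\mathrm{CAT}(0)$ comparison inequality
\[
d^2(R,\gamma(s))\le (1-s)\,d^2(R,\gamma(0))+s\,d^2(R,\gamma(1))-s(1-s)\,d^2(\gamma(0),\gamma(1)),
\]
valid for every constant-speed geodesic $\gamma\colon[0,1]\to Y$ and every $R\in Y$. Applying it once to the geodesic $[u(x),Q]$ with $R=u_t(y)$, and once to $[u(y),Q]$ with $R=u(x)$, yields — on the set where neither endpoint is truncated — the quadrilateral estimate
\[
d^2(u_t(x),u_t(y))\le (1-s_x)(1-s_y)\,d^2(u(x),u(y))+(s_x-s_y)\bigl[(1-s_y)\,w^2(y)-(1-s_x)\,w^2(x)\bigr],
\]
where $s_x:=t\eta(x)/w(x)=t\phi(x)$ and $\phi:=\eta/w$. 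Expanding in powers of $t$,
\[
d^2(u_t(x),u_t(y))-d^2(u(x),u(y))\le -t\bigl(\phi(x)+\phi(y)\bigr)d^2(u(x),u(y))+t\bigl(\phi(x)-\phi(y)\bigr)\bigl(w^2(y)-w^2(x)\bigr)+O(t^2).
\]
On the complementary region $\{w\le t\eta\}\subset\supp\eta$, which shrinks to $\{w=0\}$ (where $u_t=u$), one controls $d^2(u_t(x),u_t(y))$ by elementary triangle-inequality bounds, and this contribution will be negligible in the limit below.

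To finish, I would divide this inequality by $\varepsilon^2$, integrate over $S(x,\varepsilon)$ and then over $\Omega$ against $e^{-f}dV_g$, and let $\varepsilon\to0$. By the convergence of approximate energy densities (cf.\ \eqref{fHar0}) the first term on the right passes to $-2t\int_\Omega\phi\,|\nabla u|^2e^{-f}dV_g$, and by the polarization identity of Remark~\ref{fHarrem2}, applied with $h_1=\phi$ and $h_2=w^2$, the second passes to $-t\int_\Omega\<\nabla\phi,\nabla(w^2)\rangle e^{-f}dV_g$. Using $\nabla(w^2)=2w\nabla w$, $\nabla\phi=w^{-1}\nabla\eta-w^{-2}\eta\,\nabla w$ and $\phi=\eta/w$, and collecting terms,
\[
E^f(u_t|_{\Omega})-E^f(u|_{\Omega})\le -2t\int_\Omega\<\nabla\eta,\nabla w\rangle e^{-f}dV_g-2t\int_\Omega\frac{\eta}{w}\bigl(|\nabla u|^2-|\nabla w|^2\bigr)e^{-f}dV_g+O(t^2).
\]
The second integral is nonnegative — since $\eta\ge0$, $w>0$ there, and $|\nabla w|^2=|\nabla d(u,Q)|^2\le|\nabla u|^2$ pointwise a.e.\ by $(d(u(x),Q)-d(u(y),Q))^2\le d^2(u(x),u(y))$, i.e.\ \eqref{fHar3} — so the term $-2t\int_\Omega\frac{\eta}{w}(|\nabla u|^2-|\nabla w|^2)e^{-f}dV_g$ is $\le0$ and may be dropped. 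Combining with $E^f(u|_{\Omega})\le E^f(u_t|_{\Omega})$, dividing by $t>0$, and letting $t\to0^+$ gives $-\int_\Omega\<\nabla\eta,\nabla w\rangle e^{-f}dV_g\ge0$, which is exactly the assertion.

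I expect the main obstacle to be the degeneracy of this variation along the zero set $u^{-1}(Q)=\{w=0\}$ and its neighborhoods: there the competitor is capped at $Q$, the factors $w^{-1}$ appearing in the computation blow up, and one must verify the corresponding terms cause no damage. The safe route is to carry out the computation on each set $\{w>\delta\}$ (where $\phi$ genuinely lies in $W^{1,2}_{loc}$ and Remark~\ref{fHarrem2} applies) and then let $\delta\to0$, exploiting $|\nabla w|^2\le|\nabla u|^2\in L^1_{loc}$, $\int_{\{0<w\le\delta\}}e^{-f}dV_g\to0$, and the fact that $u_t=u$ on $\{w=0\}$; a secondary technical point is ensuring the $O(t^2)$ remainder is uniform enough to interchange with the two successive limits $\varepsilon\to0$ and $t\to0$, which relies on the local $L^1$-bounds for approximate energy densities from \cite{KorevaarSchoen93}.
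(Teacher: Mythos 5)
Your argument is correct in substance, but it takes a genuinely different route from the paper's. The paper does not redo the first variation at all: it quotes Jost's inequality \eqref{JostIneq} for $d^2(u(\cdot),Q)$ (Lemma~\ref{lemma2}, i.e.\ Lemma~5 of \cite{Jost97}) as a black box and deduces the subharmonicity of $d(u(\cdot),Q)$ purely by a choice of test function, namely $\eta_1=\eta/(2\sqrt{d^2(u,Q)+\epsilon})$, followed by $\epsilon\to 0$; the only other inputs are \eqref{fHar3} and the elementary bound $d\varphi'(d)/\varphi(d)\le 1$. You instead prove the first-order inequality from scratch, contracting $u$ toward $Q$ by the prescribed \emph{distance} $t\eta$, which amounts to running Jost's variational argument with the singular weight $\phi=\eta/w$ already built into the competitor. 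Your CAT(0) quadrilateral computation is correct, the identification of the limits via \eqref{fHar0} and Remark~\ref{fHarrem2} is the right mechanism, and the resulting inequality --- including the discarded nonnegative term $\int\frac{\eta}{w}\left(|\nabla u|^2-|\nabla w|^2\right)e^{-f}dV_g$ --- is exactly the $\epsilon=0$ shadow of the paper's chain of inequalities. What your route buys is self-containedness (no appeal to \cite{Jost97}) and a transparent geometric picture; what it costs is that the degeneracy along $u^{-1}(Q)$, which the paper kills in one line because $\sqrt{d^2+\epsilon}\ge\sqrt{\epsilon}$ keeps its analogue of $\phi$ bounded, must be handled by hand in your version. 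Your proposed $\delta$-cutoff is the correct fix and does close the gap, but it should be made explicit to count as a complete proof: replace $\eta$ by $\eta\,\chi_\delta(w)$ with $\chi_\delta$ a nondecreasing Lipschitz cutoff vanishing on $[0,\delta/2]$ and equal to $1$ on $[\delta,\infty)$, so that $\phi=\eta\chi_\delta(w)/w$ is a bounded, compactly supported $W^{1,2}$ function and no truncation at $Q$ occurs for small $t$; the variation then yields $-\int\chi_\delta(w)\langle\nabla\eta,\nabla w\rangle e^{-f}dV_g\ge \int\eta\,\chi_\delta'(w)|\nabla w|^2e^{-f}dV_g\ge 0$, and letting $\delta\to 0$ gives the claim because $\nabla w=0$ a.e.\ on $\{w=0\}$. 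With that paragraph written out (and a remark that only upper bounds on $E^f(u_t|_{\Omega})$ are needed, so no genuine interchange of the $\varepsilon$ and $t$ limits arises), your proof stands as a valid, more elementary alternative to the paper's.
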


This theorem is a consequence of Jost \cite[Lemma 5]{Jost97}. The
subharmonicity of $d(u(\cdot), Q)$ for harmonic maps from an
admissible Riemannian polyhedron with simplexwise smooth Riemannian
metric to an Hadamard space was obtained by Eells-Fuglede
\cite[Lemma ~10.2]{EF}. Their argument essentially also works in our
setting. Using Remark \ref{fHarrem2}, Jost's lemma can be
reformulated in our setting as follows.
\begin{lemma}[{\cite[Lemma 5]{Jost97}}]\label{lemma2}
If $u\in W_{loc}^{1,2}(M, Y)$ is an $f$-harmonic map into an
Hadamard space $Y$, then for any $Q\in Y$ and $\eta\in
\mathrm{Lip}_0(M)$, $0\leq\eta\leq 1$,
\begin{equation}\label{JostIneq}
-\int_M\<\nabla \eta(x), \nabla d^2(u(x), Q)\rangle
e^{-f(x)}dV_g(x)\geq 2\int_M \eta(x)|\nabla u|^2(x)e^{-f(x)}dV_g(x).
\end{equation}
\end{lemma}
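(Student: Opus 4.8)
The plan is to reduce the statement to the corresponding local result of Korevaar--Schoen (unweighted, on relatively compact domains) and then patch in the weight $e^{-f}$, since $f$ is a fixed smooth bounded-below function on each such domain. Fix $Q\in Y$, fix a relatively compact connected open $\Omega\subset M$ with smooth boundary, and fix $\eta\in\mathrm{Lip}_0(\Omega)$ with $0\le\eta\le1$; it suffices to prove \eqref{JostIneq} with all integrals over $\Omega$, because letting $\Omega$ exhaust $M$ gives the general inequality and the left-hand side is anyway supported where $\eta$ is.

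The heart of the argument is a comparison-map computation. For $t>0$ small, define $u_t\colon\Omega\to Y$ by moving $u(x)$ a fraction $t\eta(x)$ of the way along the (unique, by the Hadamard property) geodesic from $u(x)$ to $Q$; formally $u_t(x)$ is the point at parameter $t\eta(x)$ on the constant-speed geodesic $[0,1]\to Y$ from $u(x)$ to $Q$. One checks $u_t\in W^{1,2}_{loc}(M,Y)$ and $u_t=u$ off $\mathrm{supp}\,\eta$, so $f$-harmonicity of $u$ gives $E^f(u)\le E^f(u_t)$. The key quantitative input is the NPC (convexity) inequality for the distance functional along geodesics: for points $p,p'\in Y$ with geodesics to $Q$, and parameters $s,s'\in[0,1]$, one has the Reshetnyak/NPC estimate
\begin{equation*}
d^2\big(\gamma_p(s),\gamma_{p'}(s')\big)\le (1-s)(1-s')\,d^2(p,p') - (s-s')^2 d(p,Q)d(p',Q) + (\text{lower order}),
\end{equation*}
applied with $p=u(x)$, $p'=u(y)$, $s=t\eta(x)$, $s'=t\eta(y)$, $y\in S(x,\varepsilon)$. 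Inserting this into $e_\varepsilon(u_t)$, multiplying by $e^{-f}$, integrating, taking $\varepsilon\to0$ using \eqref{fHar0} and Remark \ref{fHarrem2} (the polarization identity produces exactly the cross term $\langle\nabla\eta,\nabla d^2(u,Q)\rangle$ after expanding $(\,(t\eta(x)-t\eta(y))\,)$ against the difference of $d^2(u(\cdot),Q)$ values), one obtains
\begin{equation*}
E^f(u_t)\le E^f(u) + t\int_M\langle\nabla\eta,\nabla d^2(u(\cdot),Q)\rangle e^{-f}dV_g + 2t\int_M\eta\,|\nabla u|^2 e^{-f}dV_g + o(t).
\end{equation*}
Combining with $E^f(u)\le E^f(u_t)$, dividing by $t$, and letting $t\to0^+$ yields \eqref{JostIneq}.

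The step I expect to be the main obstacle is making the $\varepsilon\to0$ limit rigorous: one must show that the "error" terms coming from the NPC comparison inequality (the failure of equality, and the contributions where the geodesic parameter varies) vanish after integration against $e_\varepsilon$ in the limit, which requires knowing $u\in W^{1,2}(\Omega,Y)$ well enough that $d(u(x),u(y))/\varepsilon$ and $d(u(x),Q)$ behave like $L^2$ objects uniformly in $\varepsilon$ — precisely the content of \cite[Theorem~1.5.1 and Theorem~1.10]{KorevaarSchoen93}, together with the target-variation estimates there. Since $e^{-f}$ is smooth and bounded above and below on the relatively compact $\Omega$, it enters purely as a harmless bounded weight and causes no new analytic difficulty; indeed one may alternatively invoke Jost's Lemma 5 of \cite{Jost97} (or \cite[Lemma~10.2]{EF}) directly on $(\Omega,g)$ and then reinsert the weight, as the quoted references state. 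I would present the proof by citing \cite{Jost97} for the unweighted local inequality and then remarking that the polarization identity of Remark \ref{fHarrem2} transports it verbatim to the weighted setting, exhausting $M$ by such $\Omega$ to conclude.
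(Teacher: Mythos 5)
Your concluding strategy --- quote Jost's Lemma 5 (or \cite[Lemma~10.2]{EF}) for the underlying inequality and use the polarization identity of Remark \ref{fHarrem2} to carry it over to the weighted energy --- is exactly what the paper does: the paper gives no independent proof of Lemma \ref{lemma2}, stating it as a reformulation of \cite[Lemma~5]{Jost97} in the weighted setting. Your expanded sketch of the first-variation argument (push $u(x)$ a fraction $t\eta(x)$ along the geodesic to $Q$, compare energies using the NPC quadrilateral estimate, pass to the limit via (\ref{fHar0}) and Remark \ref{fHarrem2}) is also the standard route behind Jost's lemma. Two concrete points need fixing, however. First, the signs in your displayed expansion are reversed: the correct first-order statement is
\begin{equation*}
E^f(u_t)\le E^f(u)-t\left(\int_M\<\nabla\eta,\nabla d^2(u(\cdot),Q)\rangle e^{-f}dV_g+2\int_M\eta\,|\nabla u|^2e^{-f}dV_g\right)+o(t),
\end{equation*}
so that minimality $E^f(u)\le E^f(u_t)$ forces the bracketed quantity to be nonpositive, which is precisely (\ref{JostIneq}); with the signs as you wrote them, the same comparison yields the \emph{reverse} inequality and the argument fails. (In the smooth picture the two negative contributions come from $\partial_\alpha d^2(u,Q)=-2\<\exp_{u}^{-1}Q,\partial_\alpha u\rangle$ and from the Hessian bound $\nabla^2\tfrac12 d^2(\cdot,Q)\ge g$ in nonpositive curvature.) Second, the parenthetical shortcut ``invoke Jost's Lemma 5 on $(\Omega,g)$ and then reinsert the weight'' is not available as stated: $u$ minimizes the weighted energy $E^f$, not the unweighted one, so the comparison $E^f(u)\le E^f(u_t)$ must be used from the outset (as your main computation in fact does); the reason the citation applies directly is that Jost's framework of generalized Dirichlet forms already accommodates the measure $e^{-f}dV_g$, with Remark \ref{fHarrem2} identifying the abstract energy pairing with $\int\<\nabla h_1,\nabla h_2\rangle e^{-f}dV_g$.
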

In fact, (\ref{JostIneq}) still holds for nonnegative functions
$\eta\in W^{1,2}(M)$ with compact support. (When $E^f(u)$ is finite,
(\ref{JostIneq}) even holds for $0\leq\eta\in W^{1,2}_0(M)$.) Now we
can prove Theorem \ref{subharmonicity} concerning the
$f$-subharmonicity of $d(u(\cdot),Q).$
\begin{proof}[Proof of Theorem \ref{subharmonicity}]
Denote by $\varphi(x):=\sqrt{x^2+\epsilon}$ for $\epsilon>0$.
For any $0\leq \eta\in \mathrm{Lip}_0(M),$ we choose a compactly
supported function
$$\eta_1(x):=\frac{\eta(x)}{2\varphi(d(u(x),Q))}\in W^{1,2}(M).$$
Then we calculate (we suppress the measure $e^{-f}dV_g$ in the
notations)
\begin{align*}
&-\int_M\left\langle\nabla\eta(x),\nabla\sqrt{d^2(u(x), Q)+\epsilon}\right\rangle =-\int_M\left\langle\nabla\eta(x),\frac{\nabla d^2(u(x),Q)}{2\varphi(d(u(x),Q))}\right\rangle\\
=&-\int_M\<\nabla\eta_1(x),\nabla
d^2(u(x),Q)\rangle-\int_M2\eta_1\frac{d(u(x),Q)\varphi'(d(u(x),Q))}{\varphi(d(u(x),Q))}|\nabla
d(u(x), Q)|^2.
\end{align*}
Note that
$$\frac{d(u(x),Q)\varphi'(d(u(x),Q))}{\varphi(d(u(x),Q))}=\frac{d^2(u(x),Q)}{d^2(u(x),
Q)+\epsilon}\leq 1,$$ and by (\ref{fHar3}), $$|\nabla d(u(x),
Q)|^2\leq |\nabla u(x)|^2,$$ we obtain
\begin{equation}
-\int_M\left\langle\nabla\eta(x), \nabla\sqrt{d^2(u(x),
Q)+\epsilon}\right\rangle\geq -\int_M\<\nabla\eta_1(x), \nabla
d^2(u(x),Q)\rangle-2\int_M\eta_1|\nabla u(x)|^2.
\end{equation}
Applying Lemma \ref{lemma2}, and letting $\epsilon\rightarrow 0$, we
complete the proof.
\end{proof}

Now we adopt the method of Li-Wang \cite{LiWang98}, a geometric
analysis method, to prove Kendall's theorem when the target is a
locally compact Hadamard space.
\begin{proof}[Proof of Theorem \ref{Kendall}]
By assumption, the space of bounded $f$-harmonic functions is of
dimension one. Then by the arguments of Grigor'yan
\cite{Grigoryan90}, every two $f$-massive subsets of $M$ have a
non-empty intersection. Here by a $f$-massive subset, we mean an open
proper subset of $\Omega\subset M$ on which there is a bounded,
nonnegative, nontrivial, $f$-subharmonic function $h$ such that
$h_{\mid\partial\Omega}=0$. Such function $h$ is called an
$f$-potential of the set $\Omega$.

Let $\hat{M}$ be the Stone-C\v{e}ch compactification of $M$. Then
every bounded continuous functions on $M$ can be continuously
extended to $\hat{M}$. Let $\Omega$ be an $f$-massive subset of $M$,
we then define the set $$S:=\bigcap_{\substack{h:
\text{$f$-potential}\\ \text{functions of
}\Omega}}\{\hat{x}\in\hat{M}\mid h(\hat{x})=\sup h\}.$$ By the
maximum principle for $f$-subharmonic functions, we know $S\subset
\hat{M}\setminus M$.

Then, by the same arguments as in \cite[Theorem~2.1]{LiWang98}, we
can prove $S\neq \emptyset$. Furthermore, for any bounded
$f$-subharmonic function $v$, we have
$S\subset\{\hat{x}\in\hat{M}\mid v(\hat{x})=\sup v\}$.

Let us take a point $Q_0\in \overline{u(M)}$. If $u(M)=\{Q_0\}$,
then we complete the proof. Otherwise, we have
$u(M)\setminus\{Q_0\}\neq \emptyset$. Since $u$ is an $f$-harmonic
map, by Theorem \ref{subharmonicity}, the function $h_1(x):=d(u(x),
Q_0)$ is an $f$-subharmonic function, which is bounded and
nonconstant. Hence $h_1$ attains its maximum at every point of $S$.
For a point $\hat{x}\in S$, there is a sequence $\{x_n\}$ in $M$
converging to $\hat{x}$ in $\hat{M}$. Note that $u$ has bounded
image. Thus by local compactness of the target $Y,$ there exists a
subsequence of $\{u(x_n)\}$ converging to $Q_1\in Y$. Now again, if
$u(M)=\{Q_1\}$, we complete the proof. Therefore, we can assume
$u(M)\setminus\{Q_1\}\neq\emptyset$. By Theorem
\ref{subharmonicity}, the function $h_2(x):=d(u(x), Q_1)$ is a
bounded $f$-subharmonic function. Thus $h_2$ achieves its maximum on
$S$, in particular at $\hat{x}$. That is
\begin{equation*}
\sup h_2(x)=h_2(\hat{x})=d(Q_1, Q_1)=0.
\end{equation*}
This contradicts our assumption. Therefore $u(M)=\{Q_1\}$ is a
constant map.
\end{proof}

\begin{remark}\label{remarkKuwae}
As pointed out to us by K. Kuwae, one can prove Kendall's theorem by
combining the methods of Li-Wang \cite{LiWang98} and Kuwae-Sturm
\cite{KuwaeSturm08} for harmonic maps into Hadamard spaces if the weak topology on the target (see \cite[Definition~2.7]{Jost94}) coincides with
the strong one, or equivalently any distance function $d(x,\cdot)$ on the target is weakly continuous for any $x\in
Y.$
\end{remark}

\

\section{Liouville type theorems}\label{section5}

In this section, we shall prove our main theorem. First of all, let
us review the classical classification theory of Riemannian
manifolds in the framework of weighted manifolds. For more details
we refer to \cite{GlasnerNakai72} and \cite{SarioNakai70}.

We recall some function spaces of $(M, g, e^{-f}dV_g)$. Let $D^f(M)$
be the set of Tonelli functions\footnote{A Tonelli function is a
continuous function with locally $L^2$-integrable weak derivatives.}
on $M$ with finite Dirichlet $f$-energy. The Royden algebra
$BD^f(M)$ is the set of bounded functions in $D^f(M)$. Under the
norm $\|u\|=\sup_M |u|+\sqrt{D^f(u)}$, $BD^f(M)$ becomes a Banach
algebra. For a sequence $\{u_n\}$ in $D^f(M)$, we say $u=C-\lim u_n$
if $u_n$ converges to $u$ uniformly on compact subsets and $u=B-\lim
u_n$ if in addition $\{u_n\}$ is uniformly bounded. We say
$u=D^f-\lim u_n$ if $\lim D^f(u_n-u)=0$. We also denote by
$u=CD^f-\lim u_n$ or $u=BD^f-\lim u_n$ to indicate two types of
convergence.

Let $C_0^{\infty}(M)$ be the set of smooth functions with compact
support and $D^f_0(M)$ be its closure under the $CD^f$-topology.  We
also denote by $HD^f(M)$ and $HBD^f(M)$ the sets of $f$-harmonic
functions in $D^f(M)$ and $BD^f(M)$ respectively.

\begin{proposition}\label{prop}
Let $(M, g, e^{-f}dV_g)$ be an $f$-parabolic weighted Riemannian
manifold. Then any $f$-subharmonic function with finite Dirichlet
$f$-energy is constant. In particular, any function in $HD^f(M)$ is
constant.
\end{proposition}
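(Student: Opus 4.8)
The plan is to exploit the $f$-parabolicity of $M$ together with the Caccioppoli-type inequality encoded in the definition of $f$-subharmonicity, and to show that a nonconstant $f$-subharmonic function with finite Dirichlet $f$-energy would contradict parabolicity. Let $u$ be an $f$-subharmonic function with $D^f(u)<\infty$. First I would recall, from Proposition \ref{f-parabolic}, that $f$-parabolicity is equivalent to $\mathrm{cap}^f(K)=0$ for every compact $K\subset M$; equivalently there is a sequence of cutoff functions $\phi_j\in\mathrm{Lip}_0(M)$ with $0\le\phi_j\le 1$, $\phi_j\equiv 1$ on a fixed compact exhaustion piece, and $\int_M|\nabla\phi_j|^2e^{-f}dV_g\to 0$.

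The core estimate is the Caccioppoli inequality for $f$-subharmonic functions: testing $\Delta_f u\ge 0$ (in the weak sense) against $\varphi=\phi_j^2(u-c)$ for a suitable constant $c$ — or against $\phi_j^2 u$ after reducing to $u\ge 0$ by replacing $u$ with $\max\{u-c,0\}$, which is again $f$-subharmonic — and using Cauchy–Schwarz and Young's inequality, yields a bound of the form
\begin{equation*}
\int_M \phi_j^2|\nabla u|^2 e^{-f}dV_g \le C\int_{\mathrm{supp}\,\nabla\phi_j}|\nabla\phi_j|^2|u-c|^2 e^{-f}dV_g.
\end{equation*}
The difficulty with the right-hand side is that $|u-c|$ need not be bounded, so one cannot simply pull it out; this is the main obstacle. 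The remedy is to run the argument on the truncations $u_N:=\min\{\max\{u,-N\},N\}$ (which are still $f$-subharmonic, or at least one uses $\max\{u,-N\}$ and $\min\{\cdot,N\}$ appropriately), obtain the estimate with $u$ replaced by a bounded function, let $j\to\infty$ using $\int|\nabla\phi_j|^2e^{-f}\to 0$ to conclude $\nabla u_N\equiv 0$, and then let $N\to\infty$; alternatively, and more cleanly, one appeals directly to Theorem \ref{quantitativethem}: since $D^f(u)<\infty$ one deduces from the Caccioppoli inequality that $v(r)=\int_{B_r}|u|^p e^{-f}dV_g$ grows slowly enough — in fact the finite-energy hypothesis combined with parabolicity forces the growth condition $\int_a^\infty r/v(r)\,dr=\infty$ to fail only if $u$ is constant — so $u$ is constant. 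I would likely present the truncation-plus-cutoff argument, as it is self-contained and only uses the weak formulation of $\Delta_f u\ge 0$ and the capacity characterization of parabolicity.

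Finally, for the "in particular" clause: if $u\in HD^f(M)$, then both $u$ and $-u$ are $f$-subharmonic with finite Dirichlet $f$-energy, so by the first part each is constant; hence $u$ is constant. This completes the proof. One small point to verify along the way is that truncations of $f$-subharmonic functions are $f$-subharmonic — this follows from the fact that $\max$ of an $f$-subharmonic function and a constant is $f$-subharmonic, a standard consequence of the weak maximum principle (or of the pointwise inequality for the weak Laplacian applied on the coincidence set), and that constants are $f$-harmonic.
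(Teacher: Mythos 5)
There is a genuine gap at the central step. Your truncation-plus-cutoff argument rests on the claim that the truncations $u_N=\min\{\max\{u,-N\},N\}$ are again $f$-subharmonic, and the justification you offer (``$\max$ of an $f$-subharmonic function and a constant is $f$-subharmonic'') only covers the truncation from below. The truncation from above, $\min\{u,N\}$, is in general \emph{not} $f$-subharmonic: taking the minimum with a constant preserves superharmonicity, not subharmonicity (on $\R^2$ with $f$ constant, $\min\{x_1,0\}$ is superharmonic and nonconstant). This is not a removable technicality. If the truncations were subharmonic, your argument would show that \emph{every} $f$-subharmonic function on an $f$-parabolic manifold is constant, with no use of the hypothesis $D^f(u)<\infty$ anywhere --- and that conclusion is false ($u(x)=x_1$ on the parabolic manifold $\R^2$ with $f$ constant is harmonic and nonconstant). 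So the finite-energy hypothesis must enter essentially, and in your write-up it never genuinely does; the alternative route you sketch through Theorem~\ref{quantitativethem} is also unsubstantiated, since finite Dirichlet $f$-energy gives no control on $v(r)=\int_{B_r}|u|^pe^{-f}dV_g$.

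The argument can be repaired along the lines you indicate, but the truncation must be used only inside the test function, never as a new subharmonic function. For $u\ge 0$ $f$-subharmonic with $D^f(u)<\infty$, test the weak inequality against $\varphi=\phi_j^2 u_N$ with $u_N=\min\{u,N\}$; since $\langle\nabla u,\nabla u_N\rangle=|\nabla u_N|^2$ a.e., Cauchy--Schwarz gives
\[
\int_M\phi_j^2|\nabla u_N|^2e^{-f}dV_g\le 2\int_M\phi_j u_N|\nabla u|\,|\nabla\phi_j|\,e^{-f}dV_g\le 2N\,D^f(u)^{1/2}\Bigl(\int_M|\nabla\phi_j|^2e^{-f}dV_g\Bigr)^{1/2},
\]
which tends to $0$ as $j\to\infty$ by the capacity characterization of $f$-parabolicity in Proposition~\ref{f-parabolic}; this is exactly where $D^f(u)<\infty$ is indispensable. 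Then $\nabla u_N\equiv 0$ for every $N$, and letting $N\to\infty$ gives $\nabla u\equiv 0$. Note also that the paper proves the proposition by an entirely different, Royden--Nakai style construction (harmonic exhaustion functions $w_k$, $v_k$, the identity $u=v$, and the final estimate $D^f(u)\le 0$), so even the corrected version of your argument would be a genuinely different, and arguably more elementary, route; but as written the proposal does not close.
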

\begin{proof}
Let $u\in D^f(M)$ be  $f$-subharmonic. we may assume $u\geq 0$ since
$\max\{u,0\}$ is also $f$-subharmonic. Let $\{M_n\}$ be an
exhaustion of $M$ and take $w_k\in BD^f(M)$ with $w_k|_{M_0}=1,$
$w_k|_{M\backslash M_k}=0$ and $f$-harmonic in
$M_k\backslash\overline{M_0}$. It follows from the $f$-parabolicity
of $M$ that $BD^f-\lim w_k=1$. On the other hand, set $v_k\in
BD^f(M)$ with  $v_k|_{M_0}=u,$ $v_k|_{M\backslash  M_k}=0$ and
$f$-harmonic in $M_k\backslash\overline{M_0}$, one can verify that
$v=BD^f-\lim v_k$ exists. Set now $\tilde{u}=u-v$, and
$\tilde{u}_m=\min\{\tilde{u},m\}$. Then $\tilde{u}=D^f-\lim
\tilde{u}_m$. Since $\tilde{u}$ is nonnegative and $f$-subharmonic,
we can compute
\begin{eqnarray}\label{ineq}
0\geq -\int_{M_k\backslash  M_0} \tilde{u}_mw_k \Delta_f \tilde{u}
e^{-f}dV_g=\int_{M}  \<\nabla(\tilde{u}_mw_k),
\nabla\tilde{u}\rangle e^{-f}dV_g.
\end{eqnarray}
As $w_k\to 1$ in $D^f$-topology, we deduce from \eqref{ineq} by
letting $k\to\infty$ that $$\int_{M}  \<\nabla \tilde{u}_m,
\nabla\tilde{u}\rangle e^{-f}dV_g=0,$$ which yields
$D^f(\tilde{u})=0$ by letting $m\to\infty$. Since
$\tilde{u}|_{M_0}=0$, we see $u=v$. Finally,
$$D^f(u)=\int_M \< \nabla u,\nabla v\rangle e^{-f}dV_g=\lim_{k\to\infty} \int_M\< \nabla u,\nabla v_k\rangle e^{-f}dV_g\leq 0,$$ and hence $u$ is a
constant.
\end{proof}

The following are the weighted version of the Royden-Nakai
decomposition theorem and the Virtanen theorem. The proofs are almost
the same as the unweighted case. For the convenience of the readers,
we shall give proofs here.

\begin{theorem}[Royden-Nakai's decomposition theorem]\label{royden} Let $(M, g, e^{-f}dV_g)$ be a non-$f$-parabolic weighted Riemannian manifold. Then any function $u\in D^f(M)$ has a unique decomposition $u=h+g$, where $h\in HD^f(M)$ and $g\in D^f_0(M)$. Moreover, if $u$ is $f$-subharmonic, then $u\leq h$.
\end{theorem}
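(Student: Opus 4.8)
The plan is to construct the harmonic part $h$ by a standard exhaustion/Perron procedure and then identify the orthogonal complement $D^f_0(M)$ via a Dirichlet-principle argument. First I would fix an exhaustion $\{M_k\}$ of $M$ by relatively compact open sets with smooth boundary, $\overline{M_k}\subset M_{k+1}$, and for each $k$ let $h_k$ be the solution of the Dirichlet problem $\Delta_f h_k=0$ in $M_k$ with $h_k=u$ on $\partial M_k$ (solvable on relatively compact domains). The key estimate is the Dirichlet principle: $h_k$ minimizes $D^f(\,\cdot\,)$ among functions equal to $u$ on $\partial M_k$, so $D^f(h_k)\le D^f(u)$ for all $k$, and moreover for $\ell>k$ one has the orthogonality $\int_{M_k}\langle\nabla h_\ell,\nabla(h_\ell-h_k)\rangle e^{-f}dV_g=0$ (since $h_\ell-h_k$ vanishes on $\partial M_k$ and $h_\ell$ is $f$-harmonic there), which gives $D^f(h_k)+D^f(h_\ell-h_k)\le D^f(h_\ell)\le D^f(u)$ on $M_k$. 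Hence $\{D^f(h_k)\}$ is monotone and bounded, and $\{h_k\}$ is $D^f$-Cauchy; combined with interior elliptic estimates (Harnack/Schauder for $\Delta_f$) it converges, locally uniformly and in $D^f$, to a function $h\in HD^f(M)$. Set $g:=u-h\in D^f(M)$.

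The next step is to show $g\in D^f_0(M)$, i.e. $g$ is a $CD^f$-limit of functions in $C_0^\infty(M)$. Here I would use the cutoff functions $w_k\in BD^f(M)$ already introduced in the proof of Proposition~\ref{prop}: since $M$ is non-$f$-parabolic, these cannot converge to $1$, but on each fixed compact set $w_k\to 1$, and more importantly the "potentials" built from the exhaustion give that $u-h_k$ extended by $0$ outside $M_k$ lies in $D^f_0(M)$. Concretely $u-h_k$ vanishes on $\partial M_k$, so after multiplying by a cutoff supported in $M_{k+1}$ and mollifying it is approximated in $CD^f$ by functions in $C_0^\infty(M)$; letting $k\to\infty$ and using $h_k\to h$ in $D^f$ and locally uniformly, we get $g=u-h=D^f\text{-}\lim(u-h_k)$ with each term in $D^f_0(M)$, hence $g\in D^f_0(M)$ since $D^f_0(M)$ is $CD^f$-closed by definition.

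For uniqueness, suppose $h_1+g_1=h_2+g_2$ with $h_i\in HD^f(M)$, $g_i\in D^f_0(M)$; then $h:=h_1-h_2=g_2-g_1\in HD^f(M)\cap D^f_0(M)$. The orthogonality relation $D^f(\varphi)=\int_M\langle\nabla\varphi,\nabla\psi\rangle e^{-f}dV_g$ for $\varphi\in HD^f(M)$, $\psi\in D^f_0(M)$ (obtained by testing $\Delta_f\varphi=0$ against a $CD^f$-approximating sequence in $C_0^\infty(M)$ for $\psi$ and passing to the limit) applied with $\varphi=\psi=h$ gives $D^f(h)=0$, so $h$ is constant; and a nonzero constant is not in $D^f_0(M)$ precisely because $M$ is non-$f$-parabolic (otherwise $1\in D^f_0(M)$ would force $\mathrm{cap}^f=0$ by Proposition~\ref{f-parabolic}), whence $h\equiv 0$. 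Finally, if $u$ is $f$-subharmonic then each $h_k\ge u$ on $M_k$ by the comparison principle (their difference is $f$-superharmonic and vanishes on $\partial M_k$), and passing to the limit yields $h\ge u$ on all of $M$.

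The main obstacle I anticipate is the identification $g\in D^f_0(M)$: producing the $CD^f$-approximation of $u-h_k$ by compactly supported smooth functions requires care with the cutoffs near $\partial M_k$ so that the Dirichlet $f$-energy of the correction terms is controlled, and one must verify that the weighted measure $e^{-f}dV_g$ causes no extra difficulty here — but since $f$ is smooth this is a routine localization, essentially identical to the unweighted Royden--Nakai argument.
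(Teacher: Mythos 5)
Your overall strategy (exhaustion, Dirichlet problems $h_k$ with boundary data $u$, energy monotonicity, passage to the limit, orthogonality of $HD^f$ and $D^f_0$ for uniqueness, comparison principle for the subharmonic case) is the same as the paper's. But there is a genuine gap in the existence step: you conclude that the $D^f$-Cauchy sequence $\{h_k\}$ "converges, locally uniformly and in $D^f$" by invoking interior elliptic estimates, yet $D^f$-Cauchyness controls only the gradients. The functions $h_k$ could drift off to infinity by additive constants, and Harnack/Schauder estimates cannot rule this out without a uniform pointwise bound on some fixed compact set. Supplying that bound is precisely where non-$f$-parabolicity enters the existence part of the proof, and your argument never uses it there -- you only invoke non-$f$-parabolicity in the uniqueness step. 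The paper closes this gap as follows: first reduce to $u\geq 0$ (so $h_k\geq 0$ by the maximum principle), then compute the flux $\int_M\langle\nabla g_k,\nabla w_k\rangle e^{-f}dV_g=\int_{\partial M_0}g_k\frac{\partial w_k}{\partial\nu}e^{-f}dA_g$ against the capacity potentials $w_k$ of $M_0$, which together with Cauchy--Schwarz yields
$$\inf_{M_0}h_k\leq \sup u+\Bigl[\frac{D^f(u)}{D^f(w)}\Bigr]^{\frac12},$$
where $D^f(w)>0$ is exactly the non-$f$-parabolicity hypothesis; the Harnack inequality for the nonnegative $f$-harmonic functions $h_k$ then bounds $\sup_{M_0}h_k$, and only now does one get $CD^f$-convergence of $h_k$ and hence of $g_k=u-h_k$ (the $C$-part of the convergence is also needed to conclude $g\in D^f_0(M)$, not just $D^f$-convergence).

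Two smaller points. Your orthogonality relation for $\ell>k$ is stated incorrectly: $h_\ell-h_k$ does \emph{not} vanish on $\partial M_k$ (it is $h_k-u$ that vanishes there); the correct bookkeeping, extending $h_k$ by $u$ outside $M_k$, gives $D^f(h_k)=D^f(h_\ell)+D^f(h_k-h_\ell)$, so the energies are monotone \emph{decreasing}, not increasing -- the Cauchy conclusion survives, but the justification as written does not. Also, since the Harnack argument above requires $h_k\geq 0$, the reduction to $u\geq 0$ (splitting $u=u^+-u^-$ as in the paper) is not merely cosmetic and should be made explicit. Your uniqueness argument and the comparison $u\leq h$ for $f$-subharmonic $u$ are fine and agree with the paper.
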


\begin{proof}
Let $u\in D^f(M)$. Assume first $u\geq 0$. Let $\{M_k\}$ be an
exhaustion of $M$. Take $h_k\in D^f(M)$ such that $h_k$ is $f$-harmonic in $M_k$ and $h_k|_{M\backslash
M_k}=u$.  Denote $g_k=u-h_k$. It follows from the maximum principle that
$h_k\geq 0$. One can check that \begin{eqnarray*}
D^f(u)&=&\int_M (|\nabla h_k|^2+|\nabla g_k|^2+2\left<\nabla h_k, \nabla g_k\right>)e^{-f}dV_g
\\&=& D^f(h_k)+D^f(g_k),
\end{eqnarray*}
where in the second equality we used integration by parts and the fact that $g_k|_{M\backslash 
M_k}=0$ and $h_k$ is $f$-harmonic in $M_k$.
Similarly we have for $m\leq k$
$$D^f(h_k-h_m)= D^f(h_k)-D^f(h_m).$$ Thus $\{h_k\}$
is  a $D^f$-Cauchy sequence, i.e., $D^f(h_k-h_m)$ is small enough when $m,k$ is large enough. 

Let $w_k\in BD^f(M)$ with
$w_k|_{M_0}=1,$ $w_k|_{M\backslash M_k}=0$ and harmonic in
$M_k\backslash\overline{M_0}$. It follows from the
non-$f$-parabolicity of $M$ that $w= BD^f-\lim w_k$ satisfies
$D^f(w)>0$.

We can compute
$$\int_M\< \nabla g_k,\nabla w_k\rangle e^{-f}dV_g=\int_{ M_k\backslash \overline M_0} \< \nabla g_k,\nabla w_k\rangle e^{-f}dV_g = \int_{\partial M_0}g_k\frac{\partial w_k}{\partial
\nu}e^{-f}dA_g, $$ where $\nu$ is the unit inward normal of
$\partial M_0.$ Since $w_k$ is $f$-harmonic in $M_k\backslash \overline M_0$, it follows from the Hopf lemma that $\frac{\partial w_k}{\partial
\nu}>0$ along $M_0$. 
It follows that
\begin{eqnarray*}
(\inf_{\partial M_0} h_k-\sup_{\partial M_0} u) \int_{\partial M_0} \frac{\partial w_k}{\partial
\nu}e^{-f}dA_g &\leq& \int_{\partial M_0} -g_k\frac{\partial w_k}{\partial
\nu}e^{-f}dA_g
\\&= &  -\int_M\< \nabla g_k,\nabla w_k\rangle e^{-f}dV_g
\\&\leq &\left[D^f(g_k)D^f(w_k)\right]^\frac12\leq \left[D^f(u)D^f(w_k)\right]^\frac12.
\end{eqnarray*}

 Combining this with the fact that $$ \int_{\partial
M_0}\frac{\partial w_k}{\partial \nu}e^{-f}dV_g=D^f(w_k),$$
we find
$$\inf_{M_0} h_k\leq \inf_{\partial M_0} h_k\leq \sup_{M_0} u+\left[\frac{D^f(u)}{D^f(w_k)}\right]^\frac12.$$
Since $w= BD^f-\lim w_k$ satisfies
$D^f(w)>0$, we see  $\inf_{M_0} h_k$ is bounded. Consequenly, by the Harnack inequality for $f$-harmonic functions, $\sup_{M_0} h_k$ is bounded. Hence there exists a subsequence of $h_k$, still denote by $h_k$, such that $\{h_k\}$ is a $C^f$-Cauchy sequence.

Together with the fact $\{h_k\}$ is a $D^f$-Cauchy sequence, we conclude that $h_k$ converges to some $h$ in $CD^f$-topology and $h\in HD^f(M)$. It is direct to check that $g_k$ converges to $g=u-h$ in $CD^f$-topology and
thus $g\in D_0^f(M)$.

Furthermore, if $u$ is $f$-subharmonic, from the construction
of $h_k$ we see $u-h_k$ is $f$-subharmonic and vanishes on $\partial M_k$ and in turn by maximum principle that $h\geq u$.

If $u$ is not nonnegative, we can run the same process for
$u^+=\max\{u,0\}$ and $u^-=-\min\{u,0\}$ as before and get the same
result.

The uniqueness follows from the fact that any $h\in HD^f(M)$ and
$g\in D^f_0(M)$ satisfy $\int_M \<\nabla h, \nabla g\rangle
e^{-f}dV_g=0$.
\end{proof}

\begin{theorem}[Virtanen's theorem]\label{virtanen}
For every $u\in HD^f(M)$ there exists a sequence $h_k\in HBD^f(M)$
such that $u=CD^f-\lim h_k$. In particular, $M$ admits no
nonconstant $f$-harmonic function on $M$ with finite Dirichlet
$f$-energy if and only if  $M$ admits no nonconstant bounded
$f$-harmonic function on $M$ with finite Dirichlet $f$-energy.
\end{theorem}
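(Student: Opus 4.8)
The plan is to produce the approximating sequence by a cutoff-and-solve procedure, following the classical Virtanen argument but carried out with respect to the weighted measure $e^{-f}dV_g$. Fix $u\in HD^f(M)$; splitting $u=u^+-u^-$ via $u^\pm$ reduces us to the case $u\ge 0$ (note $\max\{u,0\}$ is $f$-subharmonic with finite Dirichlet $f$-energy, and by the Royden--Nakai decomposition Theorem \ref{royden} its harmonic part still lies in $HD^f(M)$, so one may work with the harmonic projections of $u^\pm$). For $u\ge 0$ and $m\in\N$ set $u_m:=\min\{u,m\}$, which is a bounded $f$-subharmonic function with $D^f(u_m)\le D^f(u)<\infty$, hence $u_m\in BD^f(M)$; moreover $u_m\to u$ in $D^f$-topology since $\int_{\{u>m\}}|\nabla u|^2 e^{-f}dV_g\to 0$. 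First I would dispose of the $f$-parabolic case separately: there, Proposition \ref{prop} forces $u$ to be constant, so the statement is trivial. So assume $M$ is non-$f$-parabolic.

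Now apply the Royden--Nakai decomposition theorem (Theorem \ref{royden}) to each $u_m\in D^f(M)$: write $u_m=h_m+g_m$ with $h_m\in HD^f(M)$ and $g_m\in D^f_0(M)$, and since $u_m$ is $f$-subharmonic we get $0\le u_m\le h_m$ and $h_m\le m$ by the maximum principle comparison built into that theorem (the harmonic part of a bounded function is bounded by the same constant, because $h_m$ is a $CD^f$-limit of the solutions $h_{m,k}$ on the exhaustion with boundary data $u_m\le m$). Thus each $h_m\in HBD^f(M)$. It remains to show $h_m\to u$ in the $CD^f$-topology. For the $D^f$ part: orthogonality $\int_M\langle\nabla h_m,\nabla g_m\rangle e^{-f}dV_g=0$ gives $D^f(u_m)=D^f(h_m)+D^f(g_m)$, hence $D^f(h_m)\le D^f(u_m)\le D^f(u)$, so $\{h_m\}$ is bounded in Dirichlet $f$-energy. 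One then shows $\{h_m\}$ is $D^f$-Cauchy: for $m\le m'$, $u_{m'}-u_m$ is $f$-subharmonic and nonnegative, and tracking its Royden--Nakai decomposition together with the estimate $D^f(u_{m'}-u_m)=\int_{\{m<u\le m'\}}|\nabla u|^2 e^{-f}dV_g\to 0$ as $m,m'\to\infty$ yields $D^f(h_{m'}-h_m)\le D^f(u_{m'}-u_m)\to 0$. Hence $h_m\to h$ in $D^f$ for some $h\in HD^f(M)$, and then $g_m=u_m-h_m\to u-h$ in $D^f$; since each $g_m\in D^f_0(M)$ and $D^f_0(M)$ is $D^f$-closed, $u-h\in D^f_0(M)$. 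But $u\in HD^f(M)$, so $u-h\in HD^f(M)\cap D^f_0(M)=\{0\}$ by the orthogonality in Theorem \ref{royden}; therefore $h=u$, and $h_m\to u$ in $D^f$. For the $C$ part (locally uniform convergence): the $h_m$ are $f$-harmonic with locally bounded Dirichlet $f$-energy, so by interior elliptic estimates (Caccioppoli plus De Giorgi--Nash--Moser, or the Harnack inequality as used in the proof of Theorem \ref{royden}) the $D^f$-convergence upgrades to $C$-convergence on compact sets. This gives $u=CD^f-\lim h_m$ with $h_m\in HBD^f(M)$.

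The last assertion is then immediate: if $M$ admits no nonconstant bounded $f$-harmonic function of finite Dirichlet $f$-energy and $u\in HD^f(M)$ is arbitrary, then each approximant $h_m\in HBD^f(M)$ is constant, and a $CD^f$-limit of constants is constant, so $u$ is constant; the reverse implication is trivial since $HBD^f(M)\subset HD^f(M)$.

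The main obstacle I anticipate is the $D^f$-Cauchy estimate for $\{h_m\}$ — specifically showing that the Royden--Nakai projection is Dirichlet-norm continuous (or at least non-expansive) on the monotone family $\{u_m\}$, i.e. that $D^f(h_{m'}-h_m)\le D^f(u_{m'}-u_m)$. This requires applying the decomposition not just to the $u_m$ but to the differences $u_{m'}-u_m$ and using that these are nonnegative $f$-subharmonic, together with the orthogonality relation; the monotonicity $0\le u_m\le u_{m'}$ and the comparison $u_m\le h_m$ are what make the bookkeeping go through. Everything else — reduction to $u\ge 0$, the $f$-parabolic case, the upgrade from $D^f$ to $C$ convergence via interior estimates, and the final corollary — is routine given Theorem \ref{royden} and Proposition \ref{prop}.
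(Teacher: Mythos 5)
Your overall strategy coincides with the paper's: reduce to $u\ge 0$ and the non-$f$-parabolic case, truncate $u_m=\min\{u,m\}$, apply the Royden--Nakai decomposition $u_m=h_m+g_m$, and show $h_m\to u$ in the $CD^f$-topology. But there is a sign error at the start that propagates: for $f$-harmonic $u$, the truncation $\min\{u,m\}$ is $f$-\emph{super}harmonic (a minimum of two $f$-harmonic functions), not $f$-subharmonic, so the Royden--Nakai comparison gives $h_m\le u_m$, not $u_m\le h_m$ (equivalently $g_m\ge 0$). This is not cosmetic: the chain $0\le h_m\le u_m\le u$ (and $0\le g_m\le u_m\le u$) is exactly what furnishes the \emph{locally uniform} bound on $\{h_m\}$ and $\{g_m\}$, independent of $m$, needed to pass from $D^f$-control to $C$-convergence and to pin down additive constants. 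Your substitute bound $0\le h_m\le m$ from the maximum principle blows up with $m$ and does not serve that purpose.

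The second genuine gap is the limit identification. You argue $g_m\to u-h$ in $D^f$ and conclude $u-h\in D_0^f(M)$ because ``$D_0^f(M)$ is $D^f$-closed.'' It is not: $D^f$ is only a seminorm and $D_0^f(M)$ is defined as the $CD^f$-closure of $C_0^\infty(M)$; in the non-$f$-parabolic case a nonzero constant $c$ satisfies $D^f(g_m-c)\to 0$ whenever $D^f(g_m)\to 0$, yet $c\notin D_0^f(M)$. So your argument only yields $h=u+c$ for some constant $c$, and killing $c$ requires precisely the locally uniform control $0\le g_m\le u$ that superharmonicity provides; the paper does this by showing $g_k$ converges locally uniformly to a constant which must vanish by non-$f$-parabolicity. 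The paper also short-circuits your Cauchy-sequence detour: writing $u-u_k=(u-h_k)-g_k$ with $u-h_k\in HD^f(M)$ orthogonal to $g_k\in D_0^f(M)$ gives $D^f(u-u_k)=D^f(u-h_k)+D^f(g_k)$, hence $D^f(u-h_k)\to 0$ directly. (Your incidental claim that $u_{m'}-u_m$ is $f$-subharmonic is also false in general --- it equals the further truncation $\min\{(u-m)^+,\,m'-m\}$ --- though your Cauchy estimate only needs orthogonality, not that sign.) The final deduction of the Liouville equivalence from the approximation is fine.
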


\begin{proof}
We may assume $M$ is non-$f$-parabolic, since otherwise, any $u\in
HD^f(M)$ is constant, due to Proposition \ref{prop}, whence the
statement is trivial. We may also assume $u\geq 0$, since otherwise
we do the same analysis on $u^+$ and $u^-$. Set for any $k\in
\mathbb{N}$, $u_k=\min\{u,k\}$. Then $u_k$ is $f$-superharmonic and
$u=D^f-\lim u_k$. By Royden-Nakai decomposition, $u_k=h_k+g_k$,
where $h_k\in HD^f(M)$ and $g_k\in D_0^f(M)$. Moreover, $g_k\geq 0$.
One can verify
$$D^f(u-u_k)=D^f(u-h_k)+D^f(g_k).$$
Hence $D^f(u-h_k)\to 0$ and $D^f(g_k)\to 0.$ Since $0\leq g_k\leq
u_k\leq u$ is bounded in any compact set of $M$, we conclude that
$g_k$ converges to some constant function $c$ in $CD^f$-topology. It
follows from the non-$f$-parabolicity of $M$ that $c=0$. Therefore
$h_k$ converges to $u$ in $CD^f$-topology.

The second assertion follows easily from this approximation.
\end{proof}

The following lemma was first proved by Cheng-Tam-Wan
\cite[Theorem~1.2]{CTW}.

\begin{lemma}\label{CTW}
Let $(M, g, e^{-f}dV_g)$ be a weighted Riemannian manifold. Then the
following two statements are equivalent:
\begin{itemize}
\item[(i)] any $u\in HD^f(M)$ is bounded;
\item[(ii)] any nonnegative $f$-subharmonic function on $M$ with finite Dirichlet $f$-energy is bounded.
\end{itemize}
\end{lemma}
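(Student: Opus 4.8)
The plan is to prove the two implications separately. The direction $(ii)\Rightarrow(i)$ is immediate: if $u\in HD^f(M)$, then $|u|$ (or rather $\max\{u,0\}$ and $\max\{-u,0\}$, which are nonnegative and $f$-subharmonic) has finite Dirichlet $f$-energy since $D^f(|u|)=D^f(u)<\infty$, so by $(ii)$ it is bounded, hence so is $u$. So the real content is $(i)\Rightarrow(ii)$.

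For $(i)\Rightarrow(ii)$, first I would dispose of the $f$-parabolic case: if $M$ is $f$-parabolic, then by Proposition~\ref{prop} every $f$-subharmonic function with finite Dirichlet $f$-energy is already constant, hence bounded, so there is nothing to prove. Thus we may assume $M$ is non-$f$-parabolic, and Theorem~\ref{royden} (Royden-Nakai decomposition) is available. Now let $u$ be a nonnegative $f$-subharmonic function with $D^f(u)<\infty$, i.e. $u\in D^f(M)$. Apply the decomposition theorem: write $u=h+g$ with $h\in HD^f(M)$ and $g\in D^f_0(M)$, and the "moreover" clause gives $u\le h$ since $u$ is $f$-subharmonic. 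Since $u\ge 0$, we get $0\le u\le h$, so it suffices to show $h$ is bounded above — but $h\in HD^f(M)$, so by hypothesis $(i)$, $h$ is bounded. Hence $u$ is bounded. This completes the argument.

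The one point that needs a little care is that the hypothesis $(i)$ in the lemma is stated for the harmonic part $h$ only, while we must control the original $u$; this is exactly what the inequality $u\le h$ in Theorem~\ref{royden} delivers, together with $u\ge 0$ to bound $u$ from below. I expect the main (though still routine) obstacle to be making sure the decomposition $u=h+g$ genuinely applies: $u$ must lie in $D^f(M)$, i.e. be a Tonelli function with finite Dirichlet $f$-energy, which is part of the hypothesis in $(ii)$, and $u$ being $f$-subharmonic is what activates the inequality clause of Theorem~\ref{royden}. No Bochner-type or curvature input is needed here — the lemma is a soft consequence of the Royden-Nakai theory established above.
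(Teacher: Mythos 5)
Your proposal is correct and follows essentially the same route as the paper: the $(i)\Rightarrow(ii)$ direction is exactly the paper's argument (split into the $f$-parabolic case via Proposition~\ref{prop} and the non-$f$-parabolic case via the Royden--Nakai decomposition $u=h+g$ with $0\le u\le h$), and for $(ii)\Rightarrow(i)$ your choice of $|u|$ (or $u^{\pm}$) is an inessential variant of the paper's auxiliary function $\sqrt{u^2+1}$, both being nonnegative $f$-subharmonic with Dirichlet $f$-energy controlled by $D^f(u)$.
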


\begin{proof}
(ii)$\Rightarrow $(i). This is quite simple by observing the fact
that if $u\in HD^f(M)$, then $\sqrt{u^2+1}$ is a nonnegative
$f$-subharmonic function on $M$ with finite Dirichlet $f$-energy.

(i)$\Rightarrow $(ii).  Assume $u$ is a nonnegative $f$-subharmonic
function on $M$ with finite Dirichlet $f$-energy. If $M$ is
$f$-parabolic, then the two statements are both true by virtue of
Proposition \ref{prop} and hence equivalent. If $M$ is
non-$f$-parabolic, then by Theorem \ref{royden}, $u=h+g$ for $h\in
HD^f(M)$ and $g\in D^f_0(M)$. Moreover, since $u$ is $f$-subharmoic,
we know $u\leq h$. By the assumption (i), $h$ is bounded. Thus $u$
is also bounded. This proves the lemma.
\end{proof}

Using Lemma \ref{CTW}, we can prove the main Theorem \ref{Cheng}.
\begin{proof}[Proof of Theorem \ref{Cheng}]
Let $u$ be an $f$-harmonic map from $M$ to $Y$ with finite
$f$-energy. It follows from Theorem \ref{subharmonicity} that the
function $v:M\to \R, \quad v(x)=\sqrt{d^2(u(x),Q)+1}$  is
subharmonic, where $Q\in Y$. Also, the finiteness of the $f$-energy
of $u$ implies the finiteness of the Dirichlet $f$-energy of $v$
(recall (\ref{fHar3})).  Using the assumption and the equivalence in
Lemma  \ref{CTW}, we know that any nonnegative $f$-subharmonic
function on $M$ with finite Dirichlet $f$-energy is bounded. Hence
$v$ is bounded, in turn, $u$ has bounded image. This proves the
theorem.
\end{proof}

For harmonic maps from $f$-parabolic weighted manifolds, we don't
need the local compactness assumption of the targets to obtain the
Liouville theorem.

\begin{corollary}
Let $(M,g, e^{-f}dV_g)$ be a complete noncompact $f$-parabolic
weighted Riemannian manifold and $(Y,d)$ be an Hadamard space. Then
any $f$-harmonic map from $M$ to $Y$ with
finite $f$-energy 
is a constant map.
\end{corollary}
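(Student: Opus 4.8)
The plan is to combine the two main theorems already proved in the paper. The final statement asserts: if $(M,g,e^{-f}dV_g)$ is complete noncompact $f$-parabolic and $(Y,d)$ is an Hadamard space, then any $f$-harmonic map $u\colon M\to Y$ with finite $f$-energy is constant. The key point is that the $f$-parabolicity hypothesis is strong enough to bypass the local compactness assumption needed in Theorem~\ref{Kendall}.

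\begin{proof}
Let $u\colon M\to Y$ be an $f$-harmonic map with $E^f(u)<\infty$. Fix $Q\in Y$ and set $v(x):=\sqrt{d^2(u(x),Q)+1}$. As in the proof of Theorem \ref{Cheng}, Theorem \ref{subharmonicity} shows that $v$ is a nonnegative $f$-subharmonic function, and by \eqref{fHar3} it has finite Dirichlet $f$-energy, $D^f(v)<\infty$. Since $M$ is $f$-parabolic, Proposition \ref{prop} applies to $v$ and forces $v$ to be constant. Hence $d(u(\cdot),Q)$ is constant for every $Q\in Y$; in particular the image $u(M)$ is bounded.

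It remains to upgrade boundedness of the image to constancy. For this, note that an $f$-parabolic manifold admits no nonconstant bounded $f$-superharmonic function (Proposition \ref{f-parabolic}), hence in particular no nonconstant bounded $f$-harmonic function. Now pick any two distinct points $Q_0,Q_1\in \overline{u(M)}$ if they exist; otherwise $u$ is already constant. For each such $Q$, the function $h_Q(x):=d(u(x),Q)$ is a bounded $f$-subharmonic function on $M$ with finite Dirichlet $f$-energy, so again by Proposition \ref{prop} it is constant. Taking $Q=Q_0$ gives $d(u(x),Q_0)\equiv c_0$ for all $x$; taking $Q=Q_1$ gives $d(u(x),Q_1)\equiv c_1$. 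Since $Q_1\in\overline{u(M)}$, there is a sequence $x_n$ with $u(x_n)\to Q_1$, whence $c_1=\lim_n d(u(x_n),Q_1)=0$, so $u\equiv Q_1$ is constant.
\end{proof}

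The main (and essentially only) obstacle is to see why the target's local compactness, indispensable in the proof of Theorem \ref{Kendall}, is not needed here: under $f$-parabolicity the much cheaper tool of Proposition \ref{prop} already kills every bounded $f$-subharmonic distance function of finite Dirichlet energy directly, so no Stone--\v{C}ech compactification or subsequential limit argument in $Y$ is required. One should double-check that $h_Q(x)=d(u(x),Q)$ genuinely lies in $W^{1,2}_{\loc}(M)$ with $D^f(h_Q)<\infty$ — this follows from \eqref{fHar3} together with the triangle inequality $|d(u(x),Q)|\le |d(u(x),Q_0)|+d(Q_0,Q)$ and the already-established boundedness of the image, which makes $h_Q$ bounded; then Proposition \ref{prop} applies verbatim.
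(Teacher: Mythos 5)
Your proof is correct and follows essentially the same route as the paper: both rest on Theorem \ref{subharmonicity}, the energy bound \eqref{fHar3}, and the fact that $f$-parabolicity (via Proposition \ref{prop}) forces every finite-energy $f$-subharmonic function $d(u(\cdot),Q)$ to be constant, after which choosing $Q\in\overline{u(M)}$ gives constancy of $u$. The only cosmetic difference is that you apply Proposition \ref{prop} directly instead of routing through Theorem \ref{Cheng} and Proposition \ref{f-parabolic} as the paper does, which makes your second paragraph largely redundant with your first.
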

\begin{proof}
Let $u$ be an $f$-harmonic map from $M$ to $Y$ with finite
$f$-energy. By Proposition \ref{prop} and Theorem \ref{Cheng}, the
image of $u$ is bounded. Hence for any $Q\in Y,$ the $f$-subharmonic
function $d(u(x),Q)$ is bounded. By the $f$-parabolicity of $M$ and
Proposition \ref{f-parabolic}, the function $d(u(x),Q)$ is constant
for any $Q\in Y.$ This yields that $u$ is a constant map. The
corollary follows.
\end{proof}

Combining Theorem \ref{Kendall} and Theorem \ref{Cheng}, we obtain
Theorem \ref{main thm} by the potential theory.
\begin{proof}[Proof of Theorem \ref{main thm}.]
By assumption,  any bounded $f$-harmonic function on $M$ is constant. By Theorem \ref{virtanen}, we know that
any $f$-harmonic function on $M$ with finite Dirichlet $f$-energy is
constant. Using Theorem \ref{Cheng}, we see that any $f$-harmonic
map from $M$ to $Y$ with finite $f$-energy must have bounded image.

On the other hand,  by Theorem \ref{Kendall}, we know that any
$f$-harmonic map from $M$ to $Y$ having bounded image is constant.
Hence any $f$-harmonic map from $M$ to $Y$ with finite $f$-energy
is a constant map. This proves the theorem.
\end{proof}

\begin{proof}[Proof of Theorem \ref{main thm1}.] By a theorem of Brighton \cite{Brighton13}, the weighted manifold $(M,g, e^{-f}dV_g)$
satisfying $Ric_f\geq 0$ admits no nonconstant bounded $f$-harmonic
functions. The assertion follows from Theorem \ref{main thm} immediately.

\end{proof}

\

\noindent\textbf{Acknowledgements}. The authors thank Prof. J\"urgen Jost for
numerous inspiring discussions on harmonic maps into singular
spaces. The authors thank Prof. Kazuhiro Kuwae for the discussions
on Kendall's theorem and suggestions on the conditions of the
targets (see Remark~\ref{remarkKuwae}). We would also like to thank the referee for his critical reading and useful suggestions.

\

\end{document}